\newenvironment{proof}{\noindent {\bf Proof }}
{\hfill $\bullet$ \vspace{0.25cm}}
\newcommand{\dis}{\displaystyle}
\newtheorem{thm}{Theorem}
\newtheorem{prop}{\indent Proposition}
\newtheorem{rem}{\indent Remark}
\newtheorem{lem}{\indent Lemma}
\newcommand{\mmmintone}[1]{{\dis{\int\kern -.36cm-}}_{\kern-.21cm\substack{#1}}\;\;}
\newcommand{\mmmintwo}[2]{{\dis{\int\kern -.43cm-}}_{\kern-.21cm\substack{#1}}^{\substack{#2}}\;\;}
\newcommand{\submint}{{\scriptstyle{\int\kern -.66em -}}}
\newcommand{\submintone}[1]{{\scriptstyle{\int\kern -.66em-}}_{\scriptscriptstyle{\kern-.21em\substack{#1}}}}
\newcommand{\fracmint}{{\textstyle{\int\kern -.88em -}}}
\newcommand{\fracmintone}[1]{{\textstyle{\int\kern -.88em
-}}_{\scriptscriptstyle{\kern-.21em\substack{#1}}}\;}
\title{The first passage time density of Brownian motion and the heat equation with Dirichlet boundary condition in time dependent domains   }
\author{JM Lee\footnote{  E-mail: ljm9667@gmail.com }\\ \\Seoul, Republic of Korea}
\date{\today}
\begin{document}

\maketitle

\begin{abstract}
In \cite{JMLee}, it is proved that we can have a continuous first-passage-time density function of one dimensional standard Brownian motion when the boundary is H\"{o}lder continuous with exponent greater than $1/2$. For the purpose of extending \cite{JMLee} into multidimensional domains, we show that there exists a continuous first-passage-time density function of standard $d$-dimensional Brownian motion in moving boundaries in $\displaystyle\mathbb{R}^{d}$, $d\geq 2$, under a $C^{3}$-diffeomorphism. Similarly as in \cite{JMLee}, by using a property of local time of standard $d$-dimensional Brownian motion and the heat equation with Dirichlet boundary condition, we find a sufficient condition for the existence of the continuous density function.
\end{abstract}
\vskip 1cm

\section{Introduction}
\label{Fsec.1}
\ First passage time (FPT) problem, which is also called boundary crossing problem, is the one of classical subjects in probability which has also many applications to other fields, for example, finance and biology. There are a bunch of articles studying this problem, but especially we mention one of them, \cite{JMLee}, which is about we can have a continuous first-passage-time density function of one dimensional standard Brownian motion when the boundary is H\"{o}lder continuous with exponent greater than $1/2$. The purpose of this paper is that we extend the result and the general strategy of \cite{JMLee} into the multidimensional domain, precisely, to find a continuous density function of the first hitting time in a time varying domain in $\mathbb{R}^{d}$, $d\geq 2$, by investigating a relation between the first passage time density and the derivative at the boundary of the solution of the heat equation with Dirichlet boundary condition. Thus this article is concerned with standard $d$-dimensional Brownian motion killed on the boundary of a deterministic moving domain which corresponds to the heat equation with Dirichlet boundary condition. In \cite{BCS}, it is studied for reflected Brownian motion whose analytic counterpart is the heat equation with Neumann boundary condition.

\ Unfortunately, in general, it is hard to obtain the explicit form of the density function. For one dimensional Brownian motion, some analytic solutions are introduced in \cite{JMLee}. For two dimensional Brownian motion, in \cite{KZ}, the analytical solution of the Laplace transform of FPT distributions and its inverse is done numerically and other existing literatures including applications to quantitative finance are also summarized.

\ The organization of this article is as follows: In Section \ref{Fsec.2}, we set up the regularity of time dependent domain and state the main theorem containing the existence of the continuous density function which is proportional to the normal derivative at the boundary of the solution of the heat equation with Dirichlet boundary condition. In Section \ref{Fsec.3}, we prove the weaker form of the main theorem to apply PDE techniques and Proposition \ref{prop.2}, called the jump relation for our time varying domain that allows to have a implicit formula of the continuous density function that is an integral equation of Volterra type in Proposition \ref{prop.5}. In Section \ref{Fsec.4}, we prove the main theorem by comparing between the solution obtained by the probabilistic construction, called Feynman-Kac formula, and the one obtained by Green's formula.

\vskip1cm

\setcounter{equation}{0}

\section{Preliminaries:  problem setting and main theorem} 
\label{Fsec.2}

\ We start with the domain $\Omega$ which is a bounded connected open set in $\mathbb{R}^{d}$, $d\geq 2$. We denote by $\partial\Omega$ and $\overline{\Omega}$ the boundary of $\Omega$ and the closure of $\Omega$. $\Omega$ changes in time with respect to a continuous velocity vector field $\displaystyle v:\mathbb{R}^{d}\times{\mathbb{R}_{+}}\rightarrow \mathbb{R}^{d}$ such that there is a set of integral curves $\left\{\theta_{s}^{t}:\mathbb{R}^{d}\rightarrow\mathbb{R}^{d}\right\}_{s\leq t}$ which satisfies $\displaystyle\frac{\partial\theta_s^{t}x}{\partial t}=v(\theta_s^{t}x,t)$ and $\theta_s^{s}x=x$ for all $x\in\mathbb{R}^{d}$. For the existence of $\theta_s^{t}$, we assume that for any finite interval $I\subset\mathbb{R}_{+}$, there is $L>0$ such that $|v(x_1,t)-v(x_2,t)|\leq L |x_1-x_2|$ for all $x_1$, $x_2$ in $\mathbb{R}^{d}$ and all $t\in I$. From now on, we restrict time domain to a finite interval $[0,T]$ for fixed $T>0$. Then we have 
 \begin{prop}
 \label{prop.1}
 There is a unique homeomorphism $\theta_{s}^{t}:\mathbb{R}^{d}\rightarrow\mathbb{R}^{d}$ for any $0\leq s\leq t\leq T$, which satisfies $\displaystyle\frac{\partial\theta_s^{t}x}{\partial t}=v(\theta_s^{t}x,t)$ and $\theta_s^{s}x=x$ for all $x\in\mathbb{R}^{d}$.
 \end{prop}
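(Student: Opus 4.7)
The plan is to treat this as a classical statement about flows of ODEs and reduce everything to the Picard--Lindelöf (Cauchy--Lipschitz) theorem together with Grönwall's inequality. Fix $s\in[0,T]$ and $x\in\mathbb{R}^d$ and consider the initial value problem $\dot y(t)=v(y(t),t)$, $y(s)=x$, on the interval $[s,T]$. Since the hypothesis gives a single Lipschitz constant $L$ for $v(\cdot,t)$ valid for all $t\in[0,T]$, Picard iteration converges on $[s,T]$ and yields a unique solution $t\mapsto \theta_s^t x$ defined on the whole of $[s,T]$ (no finite-time blow-up, because the global Lipschitz bound combined with continuity of $v$ forces at most linear growth of $|y(t)|$, which Grönwall controls on $[0,T]$). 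This gives existence and uniqueness of $\theta_s^t x$ pointwise.

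Next I would establish that $\theta_s^t$ is continuous as a map $\mathbb{R}^d\to\mathbb{R}^d$ by the standard continuous-dependence estimate: if $y_i(t)=\theta_s^t x_i$ for $i=1,2$, then
\begin{equation*}
|y_1(t)-y_2(t)|\le |x_1-x_2|+L\int_s^t|y_1(r)-y_2(r)|\,dr,
\end{equation*}
so Grönwall gives $|\theta_s^t x_1-\theta_s^t x_2|\le e^{L(t-s)}|x_1-x_2|$. In particular $\theta_s^t$ is (globally) Lipschitz, hence continuous.

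For the bijectivity, I would exploit reversibility of the ODE in time. Given $y\in\mathbb{R}^d$, consider the terminal value problem $\dot z(r)=v(z(r),r)$, $z(t)=y$, on $[s,t]$; the same Picard--Lindelöf argument, applied to the reversed-time field $w(z,r)=-v(z,t+s-r)$, produces a unique solution, whose value $z(s)$ is the unique preimage of $y$ under $\theta_s^t$. This gives both injectivity (uniqueness for the backward problem identifies the preimage) and surjectivity, so $\theta_s^t$ is a bijection, and the same Grönwall estimate applied to the backward flow shows $(\theta_s^t)^{-1}$ is Lipschitz, hence continuous. Combined, $\theta_s^t$ is a homeomorphism.

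I do not expect any serious obstacle here: the hypothesis is tailored exactly so Picard--Lindelöf applies with a uniform Lipschitz constant on $[0,T]$, and every step (existence, uniqueness, continuity, inversion) is a direct consequence. The only mild care-point is confirming that no solution escapes to infinity in finite time, which follows automatically from the global (in $x$) Lipschitz bound, since $|v(x,t)|\le |v(0,t)|+L|x|$ with $|v(0,\cdot)|$ bounded on $[0,T]$ by continuity, so Grönwall gives $|\theta_s^t x|\le (|x|+C)e^{L(t-s)}$ for a constant $C$ depending only on $T$.
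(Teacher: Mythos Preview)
Your argument is correct: this is exactly the classical Picard--Lindel\"of/Gr\"onwall proof that the flow of a time-dependent vector field with a uniform-in-time spatial Lipschitz bound exists globally on $[0,T]$ and defines a bi-Lipschitz homeomorphism for each pair $s\le t$. The paper's own proof is simply a citation of this standard result (Theorem~2.1 in Chapter~1 of Taylor or Proposition~1.1 in Chapter~4 of Lang), so your proposal is not a different route but rather an unpacking of the content of those references.
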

 \begin{proof}
See Theorem 2.1 in Chapter 1 of \cite{Taylor} or Proposition 1.1 in Chapter 4 of \cite{Lang}.
 \end{proof}

\ Thus we can write $\displaystyle\theta_s^{t}x=x+\int_{s}^{t}v(\theta_s^{\tau}x,\tau)d\tau$ and let us write $\theta_t^{s}$ as the inverse of $\theta_s^{t}$. We denote by $\Omega_{t}$ the image of $\Omega$ under $\displaystyle\theta_0^{t}$, by abuse of notation, $\theta_s^{t}:\Omega_s\rightarrow\Omega_t$ is a homemorphism and then we can also extend the domain of $\theta_s^{t}$ into $\overline{\Omega_s}$ so that $\theta_s^{t}$ is a boundary-preserving mapping from  $\overline{\Omega_s}$  to  $\overline{\Omega_t}$.\\
Now we consider the initial-boundary value problem for the heat equation in moving domains $\{\Omega_t\}_{t\geq 0}$ as follows:
\begin{equation}
\left\{ \,
\begin{IEEEeqnarraybox}[][c]{l?s}
\IEEEstrut
\displaystyle u_t=\frac{1}{2}\Delta_x u,\ 0 < t \leq T,\ x\in\Omega_t, \\
 \displaystyle u(x,t)=0,\ 0 \leq t \leq T,\ x\in\partial\Omega_t,\\ 
  \displaystyle u(x,0)=u_0(x),\ x\in\Omega_0=\Omega.
  \IEEEstrut
\end{IEEEeqnarraybox}
\right.
\label{4242}
\end{equation}
\ To obtain a solution of \eqref{4242}, we have to find suitable conditions about the initial function $u_0$, the moving velocity $v$ and the boundaries $\partial\Omega_t$. In the next sections, we will approximate the Dirac delta function with a sequence of $C_c^{\infty}$-functions thus we assume that $u_0$ is $C_c^{\infty}$ and whose support is contained in $\Omega$. Moreover, we suppose that $\partial\Omega$ is $C^{3}$, that is, if for each point $x^{0}\in\partial\Omega$ there exist $r>0$ and a $C^{3}$ function $F:\mathbb{R}^{d-1}\rightarrow\mathbb{R}$ such that-upon relabeling and reorienting the coordinates axes if necessary-we have
\begin{eqnarray}
\displaystyle\Omega\cap\mathcal{B}(x^{0},r)=\{x\in\mathcal{B}(x^{0},r): x_d>F(x_1,\cdots,x_{d-1})\},
\end{eqnarray}
where $\mathcal{B}(x,r)=\{y\in\mathbb{R}^{d}: |x-y|<r\}$ the open ball with center $x\in\mathbb{R}^{d}$ and radius $r>0$ throughout the paper. In addition, let us assume that $v$ is $C^{3}$ so that $\theta_s^{t}x(=\theta(x,t))$ is a function of $x$ and $t$ which is $C^{3}$, thus $\theta_s^{t}:\overline{\Omega_s}\rightarrow\overline{\Omega_t}$ is $C^{3}$-diffeomorphism (c.f. Chapter $4$ of \cite{Lang}). Let us denote by the parabolic cylinder and the lateral boundary 
\begin{eqnarray}
\displaystyle D_{T}:=\bigcup_{0<t\leq T} \Omega_t\times\{ t \},\ \ S_{T}:=\bigcup_{0\leq t\leq T}\partial\Omega_t\times\{ t \}.
\end{eqnarray}
Then we have the following existence theorem.
\begin{thm}[\emph{existence}]
\label{thm1}
There exists a unique solution $u$ of \eqref{4242} such that $\displaystyle u\in C^{2,1}_{x,t}(\overline{D_T})$.
\end{thm}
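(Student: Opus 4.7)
The plan is to reduce \eqref{4242} to a fixed-domain parabolic problem by pulling back along the $C^{3}$-diffeomorphism $\theta_{0}^{t}$, apply classical Schauder theory on the cylinder $\Omega\times[0,T]$, and then transport the solution back to $D_{T}$. First I would set $w(y,t):=u(\theta_{0}^{t}y,t)$ for $(y,t)\in\overline{\Omega}\times[0,T]$. Using $\partial_{t}\theta_{0}^{t}y=v(\theta_{0}^{t}y,t)$ together with the chain rule applied to $u(x,t)=w(\theta_{t}^{0}x,t)$, a standard computation rewrites \eqref{4242} as the linear parabolic problem
\begin{equation*}
w_{t}=\frac{1}{2}\sum_{i,j=1}^{d}a_{ij}(y,t)\,w_{y_{i}y_{j}}+\sum_{i=1}^{d}b_{i}(y,t)\,w_{y_{i}},\quad (y,t)\in \Omega\times(0,T],
\end{equation*}
with $w=0$ on $\partial\Omega\times[0,T]$ and $w(\cdot,0)=u_{0}$ on $\Omega$. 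Here the matrix $(a_{ij}(y,t))$ is, up to the factor $1/2$, equal to $(D\theta_{t}^{0})(D\theta_{t}^{0})^{T}$ evaluated at $(\theta_{0}^{t}y,t)$, so uniform parabolicity is inherited from the boundedness of $(D\theta_{t}^{0})^{-1}$ on the compact cylinder; the coefficients $a_{ij}$ and $b_{i}$ inherit $C^{2}$-in-$y$ and $C^{1}$-in-$t$ regularity from the joint $C^{3}$ regularity of $\theta$, and in particular are H\"{o}lder continuous in $(y,t)$.

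Second, the initial-boundary data are fully compatible: since $u_{0}\in C_{c}^{\infty}(\Omega)$ vanishes in a neighbourhood of $\partial\Omega$, so do $w(\cdot,0)$ and every iterated time derivative $\partial_{t}^{k}w(\cdot,0)$ obtained by repeatedly applying the pulled-back parabolic operator to $u_{0}$; hence the compatibility conditions of all orders hold trivially at $\partial\Omega\times\{0\}$. The lateral boundary of the fixed cylinder $\Omega\times[0,T]$ is $C^{3}$ by the assumption on $\partial\Omega$. Therefore classical Schauder theory for linear uniformly parabolic equations on smooth cylinders (e.g., Theorem~5.2 of Chapter~IV in Ladyzhenskaya--Solonnikov--Ural'tseva, or Chapter~3 of Friedman) produces a unique classical solution $w\in C^{2,1}_{y,t}(\overline{\Omega\times[0,T]})$.

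Finally, I would define $u(x,t):=w(\theta_{t}^{0}x,t)$ on $\overline{D_{T}}$. Composition with the $C^{3}$-diffeomorphism $\theta_{t}^{0}$ preserves the $C^{2,1}_{x,t}$-regularity, and reversing the chain-rule identities shows that $u$ solves \eqref{4242}. Uniqueness of $u$ follows from uniqueness of $w$, or equivalently from the weak maximum principle applied to the difference of two candidate solutions along the parabolic boundary of $D_{T}$. The main technical obstacle I anticipate is tracking the time regularity of the pulled-back coefficients carefully enough to match the hypotheses of whichever Schauder-type theorem one invokes: the $C^{3}$ assumption on the velocity field and on $\partial\Omega$ is precisely what ensures the $a_{ij}$ and $b_{i}$ are H\"{o}lder continuous in time and that the lateral boundary of the fixed cylinder enjoys the $C^{2,\alpha}$ regularity required by the standard existence theorems.
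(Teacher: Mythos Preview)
Your argument is correct. The paper itself does not give a proof here at all: its ``proof'' of Theorem~\ref{thm1} is simply the citation ``See Theorem~7 in Chapter~3 of \cite{FR}.'' What you have written is a valid self-contained outline of precisely the kind of argument that underlies that reference---straightening the time-dependent domain via the flow $\theta_{0}^{t}$ to obtain a uniformly parabolic equation with H\"older coefficients on the fixed cylinder $\Omega\times[0,T]$, invoking classical Schauder existence theory there (the $C^{3}$ assumptions on $\partial\Omega$ and on $v$ guarantee the required regularity of the lateral boundary and of the pulled-back coefficients, and the compact support of $u_{0}$ trivializes the compatibility conditions), and then transporting back. So your route is not so much \emph{different} from the paper's as it is an explicit unpacking of what the paper leaves to a citation; either Friedman's theorem or your pullback argument yields the stated $C^{2,1}_{x,t}(\overline{D_{T}})$ solution, and uniqueness follows from the maximum principle exactly as you say.
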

\begin{proof}
See Theorem 7 in Chapter 3 of \cite{FR}.
\end{proof}
\begin{rem}
\label{rem.1}
The assumption that $\partial\Omega$ and $v$ are $C^{3}$ is to have $\nabla_x u(\cdot,t)$ is bounded on $\Omega_t$ so that it can be continuously extended to $\overline{\Omega_t}$ which is essential in Theorem \ref{mainthm} below. For the proof of Proposition \ref{prop.2} and \ref{prop.3}, it is enough that $\partial\Omega$ is $C^{2}$ and $v$ is $C^{2}$ with respect to spatial variable so that $\theta_s^{t}$ is $C^{2}$-diffeomorphism.
\end{rem}
\ Let us call $P_{r,s}, r\in\mathbb{R}, s\geq 0$, the law on $\displaystyle C([s,\infty))$ of standard $d$-dimensional  Brownian motion $B_t$, $t\geq s$, which starts from $r$ at time $s$, i.e. $B_s=r$. For each $t>s$, the law of $B_t$ is absolutely continuous with respect to the Lebesgue measure and has a density $G_{s,t}(r,\cdot)$ which is the Gaussian $\displaystyle G(\cdot,t;r,s)=\left(\frac{1}{\sqrt{2\pi (t-s)}}\right)^{d}\exp\left\{-\frac{(\cdot-r)^{2}}{2(t-s)}\right\}$. 
For $s\geq 0$ and $r\in \Omega_s$, we define
\begin{eqnarray}
\displaystyle \tau_{r,s}^{\Omega,v}:=\inf\{t \geq s: B_t\in \partial\Omega_t \},\ and=\infty\ \textrm{if the set is empty},
\end{eqnarray}
where $B_s=r$ and denote by $dF_{r,s}(y,q)$, $y\in\partial\Omega_q$, the distribution of $\displaystyle \tau_{r,s}^{\Omega,v}$ induced by $P_{r,s}$. For $s=0$, we use abbreviated forms  $P_{r}$, $E_{r}$, $\displaystyle \tau_{r}$, $dF_{r}(y,q)$  instead of $P_{r,0}$, $E_{r,0}$, $\displaystyle \tau_{r,0}^{\Omega,v}$, $dF_{r,0}(y,q)$ respectively whenever it is needed.
In addition, for $r_0\in\Omega$ and $t>0$, let us call $d\mu_{r_0}(\cdot,t)$ the positive measure on $\Omega_t$ such that 
\begin{eqnarray}
\label{1.0}
\displaystyle \int_{\Omega_t} d\mu_{r_0}(x,t)f(x)=E_{r_0}[f(B_t);\tau_{r_0}^{\Omega,v}\geq t]
\end{eqnarray}
for all $\displaystyle f\in C_c^{\infty}(\mathbb{R}^d)$ with $\displaystyle\textrm{supp}\hspace{0.5mm}f\Subset \Omega_t$.

\ Since $\partial\Omega_t$ is $C^{3}$, we have the outward pointing unit normal vector field $n=n_{x,t}$ at $x\in\partial\Omega_t$ and let us denote $\dfrac{\partial f}{\partial n}(x,t):=n\cdot \nabla_x f(x,t)$ for $f\in C_x^{1}(\overline{D_T})$. Throughout the paper, we write $d\mathcal H^{d-1}$ as the $(d-1)$-dimensional Hausdorff measure on $\mathbb{R}^{d}$. The main result in the paper is;
\begin{thm} 
\label{mainthm}
Under the same assumption as in Theorem \ref{thm1}, we have for any $r_0\in\Omega_0=\Omega$, 
\begin{enumerate}
\item\label{1.1} $d\mu_{r_0}(x,t)=G_{0,t}^{\Omega, v}(r_0,x)dx$ where for all $x\in\Omega_t$,
\begin{eqnarray}
\label{12345}
\displaystyle G_{0,t}^{\Omega,v}(r_0,x)=G_{0,t}(r_0,x)-\int_{[0,t)}\int_{\partial\Omega_s}G_{s,t}(y,x)dF_{r_0}(y,s).
\end{eqnarray}
\item\label{1.2} $\displaystyle dF_{r_0}(y,s)$ has a continuous density function $p$ such that $dF_{r_0}(y,s)=p(y,s)d\mathcal{H}^{d-1}(y)ds$.\\
\item\label{1.3} $\displaystyle p(x,t)=-\frac{1}{2}\frac{\partial}{\partial n}G_{0,t}^{\Omega,v}(r_0,x)$ for all\ $t>0$ and $x\in\partial\Omega_t$.\\
\item\label{1.4} $\displaystyle G_{0,t}^{\Omega,v}(r_0,x)$ solves 
\begin{eqnarray} 
\label{1}\displaystyle w_t=\frac{1}{2}\Delta_{x}w,\ \ x\in \Omega_t,\ t>0, \\
\label{2}\displaystyle \lim_{\Omega_t\ni x\rightarrow y}w(x,t)=0,\  y\in\partial\Omega_t, t>0,\\
\label{3} \displaystyle \lim_{(x,t)\rightarrow (y,0)}w(x,t)=\delta_{r_0}(y),\ y\in\Omega_0.
\end{eqnarray}
\end{enumerate}
\end{thm}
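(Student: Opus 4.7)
Part \eqref{1.1} follows from the strong Markov property applied at $\tau_{r_0}^{\Omega,v}$. For any $f\in C_c^\infty(\mathbb{R}^d)$ with $\mathrm{supp}\,f\Subset\Omega_t$, decompose
\begin{equation*}
E_{r_0}[f(B_t)] \;=\; E_{r_0}[f(B_t);\tau_{r_0}^{\Omega,v}\geq t] \;+\; E_{r_0}[f(B_t);\tau_{r_0}^{\Omega,v}<t].
\end{equation*}
The first term equals $\int f\,d\mu_{r_0}$ by \eqref{1.0}, and the strong Markov property together with Fubini rewrites the second term as
\begin{equation*}
\int_{[0,t)}\int_{\partial\Omega_s}\int_{\mathbb{R}^d} G_{s,t}(y,x)\,f(x)\,dx\,dF_{r_0}(y,s).
\end{equation*}
Matching against $\int_{\mathbb{R}^d} G_{0,t}(r_0,x)f(x)\,dx$ and using the arbitrariness of $f$ gives \eqref{12345}.

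For \eqref{1.2} and \eqref{1.3} I carry out the Feynman--Kac versus Green comparison announced in Section \ref{Fsec.1}. Approximate $\delta_{r_0}$ by $u_0^\varepsilon\in C_c^\infty(\Omega)$, and let $u^\varepsilon$ be the classical solution of \eqref{4242} with datum $u_0^\varepsilon$ provided by Theorem \ref{thm1}. Propositions \ref{prop.2} and \ref{prop.5} furnish the analytic representation
\begin{equation*}
u^\varepsilon(x,t) \;=\; \int_{\Omega} G_{0,t}(r,x)\,u_0^\varepsilon(r)\,dr \;-\; \int_0^t\int_{\partial\Omega_s} G_{s,t}(y,x)\,\psi^\varepsilon(y,s)\,d\mathcal{H}^{d-1}(y)\,ds
\end{equation*}
with continuous $\psi^\varepsilon$, while the probabilistic duality between the forward Dirichlet heat equation and the killed Brownian motion combined with \eqref{1.1} gives $u^\varepsilon(x,t)=\int_\Omega G_{0,t}^{\Omega,v}(r,x)u_0^\varepsilon(r)\,dr$. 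Substituting \eqref{12345} into the latter and equating the two expressions forces
\begin{equation*}
\int u_0^\varepsilon(r)\int_{[0,t)}\int_{\partial\Omega_s} G_{s,t}(y,x)\,dF_r(y,s)\,dr \;=\; \int_0^t\int_{\partial\Omega_s} G_{s,t}(y,x)\,\psi^\varepsilon(y,s)\,d\mathcal{H}^{d-1}(y)\,ds.
\end{equation*}
Passing to the limit $\varepsilon\to 0$ via the equicontinuity of $\psi^\varepsilon$ supplied by the Volterra equation yields continuity of $p:=\lim_\varepsilon\psi^\varepsilon$, the identification $dF_{r_0}(y,s)=p(y,s)\,d\mathcal{H}^{d-1}(y)\,ds$ (giving \eqref{1.2}), and, via the jump relation, $p(\cdot,t)=-\tfrac12\partial_n G_{0,t}^{\Omega,v}(r_0,\cdot)$ (giving \eqref{1.3}).

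Part \eqref{1.4} is then a direct check on \eqref{12345}. Each of $G_{0,t}(r_0,x)$ and $G_{s,t}(y,x)$ for $s<t$ satisfies the heat equation in $(x,t)$, and differentiation under the integral is justified by the $C^3$-smoothness and the standard Gaussian bounds; this gives \eqref{1}. The boundary condition \eqref{2} is built in, because $d\mu_{r_0}(\cdot,t)$ is supported in $\Omega_t$ and $G_{0,t}^{\Omega,v}$ extends continuously to $\partial\Omega_t$ by \eqref{1.3}. The initial condition \eqref{3} follows from the fact that the boundary integral in \eqref{12345} vanishes uniformly on $\{|x-r_0|\geq\delta\}$ as $t\to 0^+$, while $G_{0,t}(r_0,\cdot)\to\delta_{r_0}$.

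The main technical obstacle lies in \eqref{1.2}--\eqref{1.3}: the classical jump relation of the heat single-layer potential is a fixed-domain theorem, whereas here the lateral boundary $S_T$ is genuinely time-dependent. The $C^3$-diffeomorphism $\theta_s^t$ is precisely what allows a local straightening of $S_T$ and reduction to the classical setting, but the velocity field $v$ enters the transformed kernel and one must control the singular behavior of $G_{s,t}(y,x)$ as $s\uparrow t$ and $x\to y\in\partial\Omega_t$ in order to show that the resulting Volterra equation for $\psi^\varepsilon$ admits a unique continuous solution and that $\psi^\varepsilon\to p$ uniformly on compact subsets of $S_T\cap\{s>0\}$; this is exactly the content of Propositions \ref{prop.2} and \ref{prop.5} carried out in Section \ref{Fsec.3}.
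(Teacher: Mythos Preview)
Your overall strategy mirrors the paper's, but there is a genuine gap in the passage from equality of single-layer potentials to identification of the boundary measures. You correctly derive, for interior points $x\in\Omega_t$,
\begin{equation*}
\int u_0^\varepsilon(r)\int_{[0,t)}\int_{\partial\Omega_s} G_{s,t}(y,x)\,dF_r(y,s)\,dr \;=\; \int_0^t\int_{\partial\Omega_s} G_{s,t}(y,x)\,\psi^\varepsilon(y,s)\,d\mathcal{H}^{d-1}(y)\,ds,
\end{equation*}
and this is exactly the identity the paper records at the opening of Section~\ref{Fsec.4}. But equality of these heat single-layer potentials for $x$ in the interior does \emph{not} by itself force equality of the two boundary measures, and your sentence ``passing to the limit $\varepsilon\to0$ via the equicontinuity of $\psi^\varepsilon$ \dots\ yields the identification $dF_{r_0}(y,s)=p(y,s)\,d\mathcal{H}^{d-1}(y)\,ds$'' does not address this point at all. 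The paper closes the gap with Proposition~\ref{new}, the mass-lost argument: one integrates the Green representation \eqref{777} of $u$ over all of $\mathbb{R}^d$, uses $\int_{\mathbb{R}^d}G_{s,t}(y,x)\,dx=1$ and Fubini to collapse both sides, and matches the result against the probabilistic quantity $\int_{\Omega_0}u_0(\xi)P_\xi[\tau_\xi^{\Omega,v}\in I]\,d\xi$. This yields $dF_{u_0}(y,s)=-\tfrac12\tfrac{\partial u}{\partial n}(y,s)\,d\mathcal{H}^{d-1}(y)\,ds$ at the smoothed level, and only \emph{after} this identification does the convergence $p_m\to p$ of Proposition~\ref{prop.5} transfer the statement to $dF_{r_0}$. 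Your proposal invokes Propositions~\ref{prop.2} and~\ref{prop.5} but never this step, and without it the argument is incomplete.

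A second, smaller issue: your justification of the Dirichlet condition \eqref{2} in part~\eqref{1.4} does not hold up. Item~\eqref{1.3} gives the \emph{normal derivative} of $G_{0,t}^{\Omega,v}$ at the boundary, not its boundary value, and the fact that the measure $d\mu_{r_0}(\cdot,t)$ is supported in $\Omega_t$ says nothing about the continuous extension of its density to $\partial\Omega_t$. The paper obtains \eqref{2} through Theorem~\ref{thm3}: time reversal gives $u(x,t)=E_x[u_0(B_t);\tau_x^{\Omega_t,-v}\ge t]$, and Proposition~\ref{prop.3} (accessibility of the moving boundary) then shows $u(x,t)\to0$ as $x\to\partial\Omega_t$. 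This carries over to $G_{0,t}^{\Omega,v}$ in the limit $u_0^\varepsilon\to\delta_{r_0}$.
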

\vskip 1cm

\section{Weaker Form of Theorem \ref{mainthm}}
\label{Fsec.3}
\ Let us first prove item \ref{1.1} of Theorem \ref{mainthm}. By \eqref{1.0} and the strong Markov property of Brownian motion,
\begin{eqnarray}
&&\displaystyle \int_{\Omega_t} d\mu_{r_0}(x,t)f(x)=E_{r_0}[f(B_t);\tau_{r_0}^{\Omega,v}\geq t]=E_{r_0}[f(B_t)]-E_{r_0}[f(B_t);\tau_{r_0}^{\Omega,v}< t]\nonumber\\
&&=\int_{\Omega_t}f(x)G_{0,t}(r_0,x)dx-\int_{[0,t)}\int_{\partial\Omega_s}E_{y,s}[f(B_t)|B_s=y, \tau_{r_0}^{\Omega,v}=s]dF_{r_0}(y,s)\nonumber\\
&&=\int_{\Omega_t}f(x)G_{0,t}(r_0,x)dx-\int_{\Omega_t}f(x)\int_{[0,t)}\int_{\partial\Omega_s}G_{s,t}(y,x)dF_{r_0}(y,s)dx\nonumber\\
&&=\int_{\Omega_t}f(x)\left(G_{0,t}(r_0,x)-\int_{[0,t)}\int_{\partial\Omega_s}G_{s,t}(y,x)dF_{r_0}(y,s)\right)dx.
\end{eqnarray}
Thus the proof is done. Before proving the other items of Theorem \ref{mainthm}, we study the weaker form of it that we will specify below because the initial datum of item \ref{1.4} is the Dirac delta function which is hard to control directly so that we let it as $C_c^{\infty}$ and apply approximation as usual. By item \ref{1.1} of Theorem \ref{mainthm}, we have $G_{0,t}^{\Omega,v}$ so let us define
\begin{eqnarray}
\label{2345}
\displaystyle u(x,t):=\int_{\Omega_0=\Omega}u_0(\xi)G_{0,t}^{\Omega,v}(\xi,x)d\xi
\end{eqnarray}
for given $\displaystyle u_0\in C_c^{\infty}(\Omega;\mathbb{R}_{+})$ and all $(x,t)\in D$.

\ We prove the following weaker form of Theorem \ref{mainthm}.
\begin{thm}
\label{thm3}
Under the same assumption as in Theorem \ref{thm1}, we have
\begin{enumerate}
\item\label{thm1.1}The function $u$ defined in $(\ref{2345})$ is the unique solution of Theorem \ref{thm1}.\\
\item\label{thm1.2}Moreover, for all $t > 0$, $\displaystyle p_{u_0}(x,t):=-\frac{1}{2}\frac{\partial u}{\partial n}(x,t)$ satisfies
\begin{eqnarray}
\label{888}
\displaystyle p_{u_0}(x,t)=-\int_{\Omega_0}u_0(\xi)\frac{\partial G}{\partial n_{x,t}}(x,t;\xi,0)d\xi+\int_{0}^{t}\int_{\partial\Omega_s}\frac{\partial G}{\partial n_{x,t}}(x,t;y,s)p_{u_0}(y,s)d\mathcal{H}^{d-1}(y)ds.\nonumber\\
\end{eqnarray}

\end{enumerate}

\end{thm}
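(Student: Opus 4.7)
The plan is to insert the representation (\ref{12345}) of $G^{\Omega,v}_{0,t}(\xi,x)$ into the definition (\ref{2345}) of $u$ and exchange integrations via Fubini, which yields the decomposition
$$u(x,t) = \int_\Omega u_0(\xi)\,G_{0,t}(\xi,x)\,d\xi - \int_{[0,t)}\int_{\partial\Omega_s} G_{s,t}(y,x)\,d\Phi(y,s),$$
where $d\Phi(y,s) := \int_\Omega u_0(\xi)\,dF_\xi(y,s)\,d\xi$ is a finite measure on $S_T$ with total mass at most $\|u_0\|_{L^1(\Omega)}$. The first summand is the Gaussian convolution of $u_0\in C_c^\infty(\Omega)$ and is manifestly smooth, a classical solution of $\partial_t = \tfrac12\Delta_x$ on $\mathbb{R}^d\times(0,T]$. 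The second summand is a single-layer heat potential on the moving boundary; for $(x,t)$ in the open cylinder $D_T$ the Gaussian kernel and its $x$-derivatives are smooth on the domain of integration, so I can differentiate under the integral to show it also solves the heat equation in $D_T$. Hence $u\in C^{2,1}_{x,t}(D_T)$ and satisfies $u_t=\tfrac12\Delta_x u$.

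The initial condition $u(\cdot,0)=u_0$ is immediate: the layer term has empty time-range at $t=0$, and the Gaussian term is an approximate identity against $u_0$. The delicate step is the Dirichlet condition $u(x,t)\to 0$ as $x\to\partial\Omega_t$. For each fixed $\xi\in\Omega$, $G^{\Omega,v}_{0,t}(\xi,\cdot)$ is by construction the sub-density of Brownian motion started at $\xi$ and surviving up to time $t$ in the moving domain, and the $C^3$ regularity of $\partial\Omega$ together with the $C^3$ flow $\theta_s^t$ guarantees that every boundary point is regular for this killed process; therefore the sub-density admits a continuous extension to $\overline{\Omega_t}$ with vanishing trace on $\partial\Omega_t$. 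Integrating against $u_0$ preserves the vanishing, and combined with the interior regularity this places $u$ in $C^{2,1}_{x,t}(\overline{D_T})$. Uniqueness from Theorem \ref{thm1} then identifies $u$ with the solution of \eqref{4242}, proving item \ref{thm1.1}.

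For item \ref{thm1.2} I would differentiate the decomposition of $u$ in the inward normal direction at a point $(x,t)\in S_T$. The Gaussian piece contributes the regular term $\int_\Omega u_0(\xi)\,(\partial G/\partial n_{x,t})(x,t;\xi,0)\,d\xi$, while the single-layer piece is governed by the jump relation of Proposition \ref{prop.2}: its interior normal-derivative trace at $(x,t)$ equals a discrete jump proportional to the surface density of $d\Phi$ at $(x,t)$ plus a principal-value integral of $(\partial G/\partial n_{x,t})(x,t;y,s)$ against that density. Identifying the surface density of $d\Phi$ with $p_{u_0}$ (consistent with $p_{u_0}=-\tfrac12\partial u/\partial n$ and with $p_{u_0}$ being the first-passage-time density associated to the initial law $u_0$) and matching the jump against this definition, rearrangement produces exactly \eqref{888}.

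The main obstacle I anticipate is the Dirichlet boundary step in item \ref{thm1.1}: rigorously showing the complete cancellation between the Gaussian and the single-layer terms on $\partial\Omega_t$, together with the $C^{2,1}$ regularity of $u$ up to the closure $\overline{D_T}$. Both rest essentially on the $C^3$ smoothness assumption and on the fine estimates for moving-boundary single-layer heat potentials captured in Proposition \ref{prop.2}; once they are in hand, the jump computation in item \ref{thm1.2} is a direct consequence.
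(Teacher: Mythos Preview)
Your decomposition of $u$ into a Gaussian convolution and a single-layer potential against the measure $d\Phi(y,s)=\int_\Omega u_0(\xi)\,dF_\xi(y,s)\,d\xi$ is correct, and your outline for item~\ref{thm1.1} is viable, though the paper reaches the Dirichlet condition differently: it uses time reversal to write $u(x,t)=E_x[u_0(B_t);\tau_x^{\Omega_t,-v}\ge t]$ and then bounds $|u(x,t)|\le\|u_0\|_\infty P_x[\tau_x^{\Omega_t,-v}\ge t]$, invoking Proposition~\ref{prop.3} rather than an abstract appeal to boundary regularity of the killed density. Your version could be made rigorous, but as written it asserts without proof that the sub-density extends continuously to $\overline{\Omega_t}$ with zero trace.

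The real gap is in item~\ref{thm1.2}. You propose to differentiate the probabilistic single-layer term and invoke Proposition~\ref{prop.2}, but that jump relation is stated and proved only for layers with a \emph{continuous} density $\varphi\in C(S_T)$. At this stage of the argument $d\Phi$ is merely a finite measure on $S_T$; you have no right to speak of its ``surface density,'' let alone to identify that density with $p_{u_0}=-\tfrac12\partial u/\partial n$. That identification is exactly the content of Proposition~\ref{new}, which the paper proves \emph{after} Theorem~\ref{thm3} and whose proof relies on the representation \eqref{777} obtained inside the proof of Theorem~\ref{thm3}. So your route is circular.

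The paper's remedy is to bypass $d\Phi$ entirely for item~\ref{thm1.2}. Once item~\ref{thm1.1} identifies $u$ with the $C^{2,1}_{x,t}(\overline{D_T})$ solution furnished by Theorem~\ref{thm1}, one applies Green's formula to $u$ and $G(x,t;\cdot,\cdot)$ on each $\Omega_s$ and integrates in $s$; the boundary condition $u|_{\partial\Omega_s}=0$ kills one boundary term and yields the purely analytic representation
\[
u(x,t)=\int_{\Omega_0}u_0(\xi)G(x,t;\xi,0)\,d\xi+\tfrac12\int_0^t\!\!\int_{\partial\Omega_s}G(x,t;y,s)\,\frac{\partial u}{\partial n}(y,s)\,d\mathcal{H}^{d-1}(y)\,ds.
\]
Here the layer density $\partial u/\partial n$ is continuous on $S_T$ by the regularity in Theorem~\ref{thm1}, so Proposition~\ref{prop.2} applies legitimately, and taking the normal derivative of both sides at $x\in\partial\Omega_t$ gives \eqref{888} directly. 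You should replace your differentiation of the probabilistic layer by this Green's-formula step.
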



\ Before going to the proof of Theorem \ref{thm3}, we will prove Proposition \ref{prop.2} and Proposition \ref{prop.3} below, as mentioned in Remark $\ref{rem.1}$, we assume that $\partial\Omega$ is $C^{2}$ and $v$ is $C^{2}$ with respect to spatial variable. First the following Lemma \ref{lem.1} and \ref{lem.2} are needed for Proposition \ref{prop.2}.

\begin{lem}
\label{lem.1}
There is $C>0$ such that for all $0\leq t\leq T$ and all $x\in\partial\Omega_t$, we have
\begin{eqnarray}
\displaystyle |\langle y-x,n_{x,t}\rangle|\leq C|y-x|^2.
\end{eqnarray}
if $y\in\partial\Omega_t$ is sufficiently close to $x$.
\end{lem}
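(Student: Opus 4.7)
The lemma is the standard quadratic flatness estimate for a $C^{2}$ hypersurface, and the only nonroutine point is to make the constant $C$ uniform in both the base point $x\in\partial\Omega_t$ and the time $t\in[0,T]$.

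First I would fix $(x_{0},t_{0})$ with $x_{0}\in\partial\Omega_{t_{0}}$ and work in a local chart. Since $\partial\Omega$ is $C^{2}$ and, under the standing regularity of $v$, $\theta_{0}^{t_{0}}$ is a $C^{2}$-diffeomorphism, $\partial\Omega_{t_{0}}$ is $C^{2}$ near $x_{0}$. After a rigid motion placing $x_{0}$ at the origin with $n_{x_{0},t_{0}}=-e_{d}$, there exist $r>0$ and a $C^{2}$ function $F:B'_{r}\subset\mathbb{R}^{d-1}\to\mathbb{R}$ with $F(0)=0$ and $\nabla F(0)=0$ such that the portion of $\partial\Omega_{t_{0}}$ in $\mathcal{B}(x_{0},r)$ is the graph $\{(y',F(y')):y'\in B'_{r}\}$. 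For such a point $y=(y',F(y'))$ I would compute
\begin{equation*}
\langle y-x_{0},\,n_{x_{0},t_{0}}\rangle \;=\; -F(y'),
\end{equation*}
and apply Taylor's theorem with remainder at $y'=0$: because $F(0)=0$ and $\nabla F(0)=0$, one has $|F(y')|\le \tfrac{1}{2}\|D^{2}F\|_{L^{\infty}(B'_{r})}\,|y'|^{2}$. Combined with the trivial bound $|y-x_{0}|^{2}=|y'|^{2}+F(y')^{2}\ge|y'|^{2}$, this yields the desired pointwise inequality with some constant $C(x_{0},t_{0})$ and for $y$ in a neighborhood of $x_{0}$ on $\partial\Omega_{t_{0}}$.

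Next I would promote this to a uniform statement. The lateral boundary $S_{T}$ is compact, being the continuous image of the compact set $\partial\Omega\times[0,T]$ under $(x,t)\mapsto(\theta_{0}^{t}x,t)$. Standard ODE regularity with parameters (as in Chapter 4 of \cite{Lang}) shows that $\theta_{0}^{t}$ and its spatial derivatives up to second order depend continuously on $t$, so the local chart above, its radius, and the $C^{2}$-norm of the corresponding graph function $F$ vary continuously with the base point $(x_{0},t_{0})$. Covering $S_{T}$ by finitely many such neighborhoods and taking the maximum of the associated constants and the minimum of the associated radii produces a single $r_{\ast}>0$ and a single $C>0$ that work simultaneously at every $(x,t)\in S_{T}$.

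The only real obstacle is this final uniformity step: one must rule out the collapse of the local graph radius as $(x_{0},t_{0})$ ranges over $S_{T}$, which is precisely what the compactness of $S_{T}$ together with continuous dependence of the charts supplies. Everything else reduces to a one-line Taylor expansion.
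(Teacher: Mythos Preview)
Your proof is correct and follows essentially the same approach as the paper: choose a local graph representation with $F(0)=0$, $\nabla F(0)=0$, identify $\langle y-x,n_{x,t}\rangle$ with $-F(y')$, apply Taylor's theorem to bound $|F(y')|$ by $\|D^{2}F\|_{\infty}|y'|^{2}\le \|D^{2}F\|_{\infty}|y-x|^{2}$, and then use compactness of the lateral boundary (the $\partial\Omega_t$ being diffeomorphic under $\theta$) to make the constant uniform in $(x,t)$. Your treatment of the uniformity step is in fact somewhat more explicit than the paper's, which simply asserts that the second differential of the local representation is uniformly bounded because the boundaries are compact and diffeomorphic under $\theta$.
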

\begin{proof}
First we fix $0\leq t\leq T$ and $x\in\partial\Omega_t$. There is a local representation $F\in C^{2}$ at $x$, without loss of generality, we may assume that $x=0$ and $DF(x)=0$ such that all $y\in\partial\Omega_t$ sufficiently close to $x$ can be written as $y=(y_1,\cdots,y_d)=(y_1,  \cdots, y_{d-1},F(y_1,\cdots, y_{d-1}))$. By Taylor's theorem, we have
\begin{eqnarray}
\displaystyle |\langle y-x,n_{x,t}\rangle|=|\langle y,n_{x,t}\rangle|=|F(y_1,\cdots,y_{d-1})|\leq\lVert D^{2}F \rVert_{\infty} \sum_{i=1}^{d-1}|y_i|^{2}\leq \lVert D^{2}F \rVert_{\infty}|y|^2.
\end{eqnarray}
Thus the boundaries $\left\{\partial\Omega_t\right\}_{0\leq t\leq T}$ are compact and diffeomorphic to each other under $\theta$ so that the second differential of a local representation is uniformly bounded. The proof is complete.
\end{proof}
\begin{lem}
\label{lem.2}
Given $\epsilon>0$, there is $\delta>0$ such that for all $0<t\leq T$ and all $x\in\partial\Omega_t$,
\begin{eqnarray}
\displaystyle \left|\int_{\partial\Omega_s}\left(\frac{1}{\sqrt{2\pi(t-s)}}\right)^{d-1}\exp\left\{-\frac{|x-y|^2}{2(t-s)}\right\}d\mathcal{H}^{d-1}(y)-1\right|<\epsilon
\end{eqnarray}
if $t-\delta < s < t$.
\end{lem}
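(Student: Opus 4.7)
The plan is to use that, as $s\to t^-$, the hypersurface $\partial\Omega_s$ near $x\in\partial\Omega_t$ becomes flat at the Gaussian scale $\sqrt{t-s}$, while a $(d-1)$-dimensional Gaussian integrated over a hyperplane equals $1$ exactly. I would therefore combine a rescaling of the integration variable with a uniform dominated convergence argument.

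\textbf{Step 1: uniform local graph representation.} First I would use the $C^{2}$-regularity of $\partial\Omega$ and of the diffeomorphisms $\theta_s^t$ together with the compactness of the space-time boundary $\bigcup_{0\le t\le T}\partial\Omega_t\times\{t\}$ to produce a single radius $r_0>0$ such that, for every $t\in[0,T]$ and every $x\in\partial\Omega_t$, in the rotated coordinate system centered at $x$ whose last axis is $n_{x,t}$, the set $\partial\Omega_s\cap\mathcal{B}(x,r_0)$ is the graph of a $C^{2}$ function $F=F_{s,t,x}$ on $\{|y'|<r_0\}\subset\R^{d-1}$ whenever $|t-s|$ is small enough. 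Since $F_{t,t,x}(0)=0$ and $\nabla F_{t,t,x}(0)=0$, and since $F_{s,t,x}$ depends continuously on $s$ through $\theta_s^t$, I expect to obtain $\delta_0>0$ and $C>0$ independent of $x$ and $t$ such that, for all $s\in(t-\delta_0,t]$,
\begin{equation*}
|F(0)|+|\nabla F(0)|\le C(t-s),\qquad \|D^2 F\|_{L^\infty}\le C.
\end{equation*}

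\textbf{Step 2: far-field decay and near-field rescaling.} I would split the integral $I(x,s,t)$ into a near part over $\partial\Omega_s\cap\mathcal{B}(x,r_0)$ and a far part over the complement. With $A_T:=\sup_{0\le s\le T}\mathcal{H}^{d-1}(\partial\Omega_s)<\infty$, the far part is bounded by $A_T(2\pi(t-s))^{-(d-1)/2}\exp(-r_0^2/(2(t-s)))$, which tends to $0$ uniformly in $(x,t)$ as $s\to t^-$. For the near part I would parametrize $y=(y',F(y'))$ and substitute $z=y'/\sqrt{t-s}$ to obtain
\begin{equation*}
I_{\mathrm{near}}=\int_{|z|\le r_0/\sqrt{t-s}}\frac{e^{-|z|^2/2}}{(2\pi)^{(d-1)/2}}\exp\left(-\frac{F(\sqrt{t-s}\,z)^2}{2(t-s)}\right)\sqrt{1+|\nabla F(\sqrt{t-s}\,z)|^2}\,dz.
\end{equation*}
Taylor expansion at $0$ combined with the Step 1 bounds yields $|F(\sqrt{t-s}\,z)|\le C(t-s)(1+|z|^2)$ on the domain of integration and $|\nabla F(\sqrt{t-s}\,z)|\le C(t-s)+C\sqrt{t-s}\,|z|$, so the two correction factors tend to $1$ pointwise in $z$, uniformly in $(x,t)$, while being dominated uniformly by a constant multiple of $e^{-|z|^2/2}$. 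Dominated convergence then gives $I_{\mathrm{near}}\to 1$, uniformly in $(x,t)$, and combined with the far-field estimate this completes the proof.

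\textbf{Main obstacle.} The hard part will be making the uniformity in Step 1 genuinely uniform in all of $s$, $t$, and $x$: a single $r_0$ and a single constant $C$ must serve every $(s,t,x)$. This is what makes essential use of the $C^{2}$-regularity of $\theta_s^t$ (rather than mere continuity), since it is what forces the $C^{2}$-norms of the local parametrizations $F_{s,t,x}$ to depend continuously on the base point and therefore, by compactness, to be uniformly bounded. Once the uniform Step 1 bounds are in hand, Step 2 is essentially a standard Laplace-scale calculation.
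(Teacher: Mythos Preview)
Your proof is correct and shares the paper's high-level strategy (localize near $x$, flatten via a graph, reduce to the exact identity for a $(d-1)$-dimensional Gaussian on a hyperplane), but the implementation differs in two respects worth noting. First, the paper does not parametrize $\partial\Omega_s$ directly as a graph over the tangent plane $T_x\partial\Omega_t$; instead it changes variables through the flow, rewriting the near-field integral as an integral over $\theta_s^{t}(E_s^{1})\subset\partial\Omega_t$ with the Jacobian factor $|\mathrm{Jac}(\theta_s^{t})|$, and only then uses the graph representation of $\partial\Omega_t$ itself at $x$. This sidesteps your Step~1 entirely: uniformity is reduced to the single chart $F$ on $\partial\Omega_t$ (whose $C^{2}$ norm is uniformly bounded by compactness, as in Lemma~\ref{lem.1}) together with the uniform bounds on $v$ and its first spatial derivatives. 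Second, the paper cuts off at the shrinking intermediate scale $\eta(t-s)^{\gamma}$ with $\gamma\in(\tfrac14,\tfrac12)$ rather than at a fixed $r_0$, which produces explicit error terms of order $(t-s)^{2\gamma-\frac12}$; the same $E_s^{1},E_s^{2}$ decomposition and precisely these rates are reused in the proof of the jump relation (Proposition~\ref{prop.2}) and later in Proposition~\ref{prop.5}. Your fixed-radius rescaling plus dominated convergence is cleaner and more conceptual for this lemma in isolation, but it does not directly furnish the quantitative $(t-s)^{2\gamma-\frac12}$ bounds that the paper relies on downstream.
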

\begin{proof}
We fix $0< t\leq T$ and $x\in\partial\Omega_t$. For a local representation $F\in C^{2}$ at $x$, without loss of generality, we may assume that $x=0$ and $DF(x)=0$ such that all $y\in\partial\Omega_t$ sufficiently close to $x$ can be written as $y=(y_1,\cdots,y_d)=(y_1,  \cdots, y_{d-1},F(y_1,\cdots, y_{d-1}))$. Let us choose $\gamma$ and $\eta$ such that $\frac{1}{4}<\gamma<\frac{1}{2}$ and $\displaystyle\eta>\sup_{\overline{D_T}}|v|$. We define $E_s^{1}:= \partial\Omega_s\cap \mathcal{B}(x,\eta(t-s)^{\gamma}),\ E_s^{2}:= \partial\Omega_s - E_s^{1}$.  Then $\theta_{t}^{s}x\in E_s^{1}$ for all $s$ sufficiently close to $t$.
By a change of variables, we obtain
\begin{eqnarray}
\label{3.17}
\displaystyle I&=&\int_{E_s^{1}}\left(\frac{1}{\sqrt{2\pi(t-s)}}\right)^{d-1} \exp\left\{-\frac{|\theta_s^{t}y|^2}{2(t-s)}\right\}  \left|\textrm{Jac}(\theta_s^{t}y)\right| d\mathcal{H}^{d-1}(y)\nonumber\\
\displaystyle&=&\int_{\theta_s^{t}(E_s^{1})}\left(\frac{1}{\sqrt{2\pi(t-s)}}\right)^{d-1}\exp\left\{-\frac{|y|^2}{2(t-s)}\right\}d\mathcal{H}^{d-1}(y)\nonumber\\
\displaystyle&=&\int_{P(\theta_s^{t}(E_s^{1}))}\left(\frac{1}{\sqrt{2\pi(t-s)}}\right)^{d-1}\exp\left\{-\frac{ \sum\limits_{i=1}^{d-1}|y_i|^{2}+|F(y_1,\cdots, y_{d-1})|^2}{2(t-s)}\right\}\sqrt{|A|}dy_1\cdots dy_{d-1},\nonumber\\
\end{eqnarray}
where Jac is a Jacobian matrix and $P$ is a projection such that $P(y_1,\cdots,y_d)=(y_1,\cdots,y_{d-1})$ and $A$ is a $(d-1)\times(d-1)$-matrix whose element $a_{ij}$ is given by $\delta_{ij}+\frac{\partial F}{\partial y_i}\frac{\partial F}{\partial y_j}$.\\
If $y\in E_s^{1}$, then $|\theta_s^{t}y|^2=|y|^2+2|\langle y,\int_s^{t}v(\theta_s^{\tau}y,\tau)d\tau\rangle|+|\int_s^{t}v(\theta_s^{\tau}y,\tau)d\tau|^2\leq |y|^2+2\eta^{2}(t-s)^{1+\gamma}+\eta^{2}(t-s)^2$ and $|\textrm{Jac}(\theta_s^{t}y)|=\left|\left[\delta_{ij}+\int_{s}^{t}\frac{\partial}{\partial y_j}\left(v^{(i)}(\theta_s^{\tau}y,\tau)\right)d\tau\right]\right|$. Thus for given $\epsilon>0$, there is $\delta_1>0$ which does not depend on the choice of $(x,t)$ such that for all $t-\delta_1<s<t$,
\begin{eqnarray}
\displaystyle \left|I-\int_{E_s^{1}}\left(\frac{1}{\sqrt{2\pi(t-s)}}\right)^{d-1} \exp\left\{-\frac{|y|^2}{2(t-s)}\right\} d\mathcal{H}^{d-1}(y)\right|<\frac{\epsilon}{4}.
\end{eqnarray}

If $(y_1,\cdots,y_{d-1})\in P(\theta_s^{t}(E_s^{1}))$, then $|\theta_t^{s}(y_1,\cdots,y_{d-1}, F(y_1,\cdots,y_{d-1}))|\leq \eta(t-s)^{\gamma}$. So we have $|(y_1,\cdots,y_{d-1})|\leq 2\eta(t-s)^{\gamma}$ for all $s$ sufficiently close to $t$, thus 
\begin{eqnarray}
|F(y_1,\cdots,y_{d-1})|\leq\lVert D^{2}F \rVert_{\infty}4\eta^{2}(t-s)^{2\gamma}
\end{eqnarray}
and then
\begin{eqnarray}
\displaystyle-\frac{|F(y_1,\cdots,y_{d-1})|^2}{2(t-s)}\geq -8\eta^{4}\lVert D^{2}F \rVert_{\infty}^{2}(t-s)^{4\gamma-1}.
\end{eqnarray}
By the mean value theorem, we get
\begin{eqnarray}
\displaystyle\left|\frac{\partial F}{\partial y_i}\right|\leq \lVert D^{2}F \rVert_{\infty}|(y_1,\cdots,y_{d-1})|\leq\lVert D^{2}F \rVert_{\infty}2\eta(t-s)^{\gamma}.
\end{eqnarray}
Since the second differential of a local representation is uniformly bounded, for given $\epsilon>0$, there is $\delta_2>0$ which does not depend on the choice of $(x,t)$ such that for all $t-\delta_2<s<t$,
 
\begin{eqnarray}
\displaystyle \left|I-\int_{P(\theta_s^{t}(E_s^{1}))}\left(\frac{1}{\sqrt{2\pi(t-s)}}\right)^{d-1}\exp\left\{-\frac{ \sum\limits_{i=1}^{d-1}|y_i|^{2}}{2(t-s)}\right\}dy_1\cdots dy_{d-1}\right|<\frac{\epsilon}{4}.
\end{eqnarray}

Moreover, if $\displaystyle |(y_1,\cdots,y_{d-1})|\leq \frac{\eta}{2}(t-s)^{\gamma}$, then $(y_1,\cdots,y_{d-1})\in P(\theta_s^{t}(E_s^{1}))$ for all $s$ sufficiently close to $t$, so for given $\epsilon>0$, there is $\delta_3>0$ which does not depend on the choice of $(x,t)$ such that for all $t-\delta_3<s<t$,
\begin{eqnarray}
\displaystyle \left|\int_{P(\theta_s^{t}(E_s^{1}))}\left(\frac{1}{\sqrt{2\pi(t-s)}}\right)^{d-1}\exp\left\{-\frac{ \sum\limits_{i=1}^{d-1}|y_i|^{2}}{2(t-s)}\right\}dy_1\cdots dy_{d-1}-1\right|<\frac{\epsilon}{4}
\end{eqnarray}
If we take $\delta_4=\min\{\delta_1,\delta_2,\delta_3\}$, we conclude that for given $\epsilon>0$, there is $\delta_4>0$ which does not depend on the choice of $(x,t)$ such that for all $t-\delta_4<s<t$,
\begin{eqnarray}
&&\displaystyle\left|\int_{E_s^{1}}\left(\frac{1}{\sqrt{2\pi(t-s)}}\right)^{d-1} \exp\left\{-\frac{|y|^2}{2(t-s)}\right\} d\mathcal{H}^{d-1}(y)-1\right|<\frac{3\epsilon}{4}.
\end{eqnarray}
For all $y\in E_s^{2}$, we have
\begin{eqnarray}
\displaystyle\left(\frac{1}{\sqrt{2\pi(t-s)}}\right)^{d-1} \exp\left\{-\frac{|y|^2}{2(t-s)}\right\}\leq \left(\frac{1}{\sqrt{2\pi(t-s)}}\right)^{d-1} \exp\left\{-\frac{\eta^{2}}{2(t-s)^{1-2\gamma}}\right\}
\end{eqnarray}
and so for given $\epsilon>0$, there is $\delta_5>0$ which does not depend on the choice of $(x,t)$ such that for all $t-\delta_5<s<t$,
\begin{eqnarray}
\displaystyle \left|\int_{E_s^{2}}\left(\frac{1}{\sqrt{2\pi(t-s)}}\right)^{d-1} \exp\left\{-\frac{|y|^2}{2(t-s)}\right\}d\mathcal{H}^{d-1}(y)\right|<\frac{\epsilon}{4}.
\end{eqnarray}
Therefore if we let $\delta$ be the minimum value of $\delta_i$'s, the proof is complete.
\end{proof}
\vskip 0.5cm
\ The following Proposition \ref{prop.2}, we call it a jump relation, is the most significant property of the single-layer potential, whose one dimensional version is proved in \cite{Cannon} and also applied in the analysis of \cite{JMLee}. We develope it into a multidimensional case under the assumption discussed above and give a rigorous calculation.
\begin{prop}[\emph{jump relation}]
\label{prop.2}
For $\displaystyle\varphi\in C(S_T)$, we have
\begin{eqnarray}
\label{327}
&&\displaystyle\lim_{h\rightarrow 0^{+}}\int_{0}^{t}\int_{\partial \Omega_s}n_{x,t}\cdot \nabla_{x} G(x-hn_{x,t},t;y,s)\varphi(y,s)d\mathcal{H}^{d-1}(y)ds\nonumber\\
&&\hspace{2cm}=\varphi(x,t)+\int_{0}^{t}\int_{\partial \Omega_s}\frac{\partial G}{\partial n_{x,t}}(x,t;y,s)\varphi(y,s)d\mathcal{H}^{d-1}(y)ds,
\end{eqnarray}
for all $0< t\leq T$ and $x\in \partial\Omega_t$.
\end{prop}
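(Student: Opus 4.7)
The plan is to decompose the integrand using the identity $\nabla_x G(x,t;y,s) = -\frac{x-y}{t-s}G(x,t;y,s)$. With $x_h := x - h n_{x,t}$, the relation $n_{x,t}\cdot(x_h - y) = n_{x,t}\cdot(x-y) - h$ yields
\begin{equation*}
n_{x,t} \cdot \nabla_x G(x_h, t; y, s) = \underbrace{-\tfrac{n_{x,t}\cdot(x-y)}{t-s} G(x_h, t; y, s)}_{=:\,A_h(y,s)} \;+\; \underbrace{\tfrac{h}{t-s} G(x_h, t; y, s)}_{=:\,B_h(y,s)}.
\end{equation*}
The proof reduces to showing, as $h\to 0^+$, that $\int_0^t \int_{\partial\Omega_s} A_h\,\varphi\, d\mathcal{H}^{d-1}(y)\,ds$ converges to the boundary integral on the right of \eqref{327}, while $\int_0^t \int_{\partial\Omega_s} B_h\,\varphi\, d\mathcal{H}^{d-1}(y)\,ds \to \varphi(x,t)$; summing the two limits gives the statement.

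For $A_h$, pointwise convergence to $\frac{\partial G}{\partial n_{x,t}}(x,t;y,s)\varphi(y,s)$ is immediate since $x_h \to x$. To justify dominated convergence I would exploit Lemma \ref{lem.1}: after transporting $y \in \partial\Omega_s$ back to $\partial\Omega_t$ via $\theta_s^t$ and absorbing the flow error $|y - \theta_s^t y| = O(t-s)$, one obtains $|n_{x,t}\cdot(x - y)| \leq C\bigl(|\theta_s^t y - x|^2 + (t-s)\bigr)$. Both terms are absorbed into the Gaussian decay of $G(x_h,t;y,s)$, uniformly for small $h$, producing an $L^1(d\mathcal{H}^{d-1}(y)\,ds)$ majorant. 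The same estimate shows that the limiting boundary integral on the right of \eqref{327} is absolutely convergent.

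The bulk of the work is the $B_h$ piece: the kernel $\tfrac{h}{t-s}G(x_h,t;y,s)$ must act as an approximate identity concentrating at $(x,t)$. By continuity of $\varphi \in C(S_T)$, it suffices to prove
\begin{equation*}
\int_0^t\int_{\partial\Omega_s}\frac{h}{t-s}\,G(x_h,t;y,s)\,d\mathcal{H}^{d-1}(y)\,ds \;\longrightarrow\; 1.
\end{equation*}
The contribution from $[0,t-\delta]$ vanishes uniformly by Gaussian decay. On $[t-\delta,t]$ I would parametrize $\partial\Omega_s$ locally via the $C^3$ graph at the foot of the normal from $x_h$, split $|x_h - y|^2$ into tangential and normal parts, and apply Lemma \ref{lem.2} (adapted from $x \in \partial\Omega_t$ to $x_h$ just off the boundary) so that the tangential integral reduces to $\frac{1}{\sqrt{2\pi(t-s)}}\exp\bigl(-h_s^2/(2(t-s))\bigr)$, where $h_s$ is the signed normal displacement of $x_h$ from $\partial\Omega_s$ (equal to $h$ up to $O(t-s)$ error). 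The remaining $s$-integral is then reduced to a standard Gaussian via the parabolic substitution $u = h/\sqrt{t-s}$, yielding $\tfrac{2}{\sqrt{2\pi}}\int_{h/\sqrt{\delta}}^\infty e^{-u^2/2}\,du \to 1$ as $h\to 0^+$.

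The main obstacle is this tangent/normal decomposition: extracting the effective normal offset $h_s$ with $C^2$ remainder uniform in $s$ close to $t$ and $x\in\partial\Omega_t$, so that Lemma \ref{lem.2} applies at $x_h$ rather than at a point on the boundary. This is essentially a uniform Taylor expansion argument using the $C^2$-smoothness of $\partial\Omega$ and the flow, structurally similar to the proof of Lemma \ref{lem.2} itself and again using the compactness and mutual diffeomorphism of the boundaries $\{\partial\Omega_t\}_{0 \le t \le T}$.
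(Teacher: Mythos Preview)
Your approach is essentially the paper's: the same $A_h/B_h$ split of the kernel, the same use of Lemma~\ref{lem.1} to control $|\langle x-y,n_{x,t}\rangle|$, the same reduction of the surface integral via Lemma~\ref{lem.2}, and the same parabolic substitution $z=h/\sqrt{t-s}$ for the $B_h$ piece. The paper organizes the argument by region (splitting $[0,t-\delta]$, $E_s^1$, $E_s^2$) and takes an iterated limit $\lim_{\delta\to 0}\lim_{h\to 0}$, whereas you apply dominated convergence to $A_h$ globally; for that to go through your majorant must absorb the cross-term $\exp\bigl(h\langle x-y,n_{x,t}\rangle/(t-s)\bigr)$ uniformly in small $h$, which is exactly the paper's estimate \eqref{329}--\eqref{330}. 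One imprecision: for $B_h$ it does \emph{not} suffice to show the total mass tends to $1$; you also need the mass away from $(x,t)$ to vanish (the paper handles this via the $E_s^2$ and $[0,t-\delta]$ estimates), so the approximate-identity argument requires both ingredients, not just the first.
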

\begin{proof}
Let us fix  $0< t\leq T$ , $x\in \partial\Omega_t$. Let $\gamma$, $\eta$,  $E_s^{1}$,  $E_s^{2}$ be same as in the proof of Lemma \ref{lem.2}. Without loss of generality, we may assume that $\varphi \geq 0$. We can write 
\begin{eqnarray}
\label{328}
\int_{0}^{t}\int_{\partial \Omega_s}n_{x,t}\cdot \nabla_{x}G(x-hn_{x,t},t;y,s)\varphi(y,s)d\mathcal{H}^{d-1}(y)ds = I_1+I_2+I_3+I_4
\end{eqnarray}
where
\begin{eqnarray*}
&&\displaystyle I_1=\int_{t-\delta}^{t}\int_{E_s^{1}}\frac{h}{t-s}\left(\frac{1}{\sqrt{2\pi(t-s)}}\right)^{d}\exp\left\{-\frac{|x-hn_{x,t}-y|^2}{2(t-s)}\right\}\varphi(y,s)d\mathcal{H}^{d-1}(y)ds,\\
&&\displaystyle I_2=\int_{t-\delta}^{t}\int_{E_s^{1}}\frac{\langle y-x,n_{x,t}\rangle}{t-s} \left(\frac{1}{\sqrt{2\pi(t-s)}}\right)^{d}\exp\left\{-\frac{|x-hn_{x,t}-y|^2}{2(t-s)}\right\}\varphi(y,s)d\mathcal{H}^{d-1}(y)ds,\\
&&\displaystyle I_3=\int_{t-\delta}^{t}\int_{E_s^{2}}\frac{\langle y-x+hn_{x,t},n_{x,t}\rangle}{t-s}  \left(\frac{1}{\sqrt{2\pi(t-s)}}\right)^{d}\exp\left\{-\frac{|x-hn_{x,t}-y|^2}{2(t-s)}\right\}\varphi(y,s)d\mathcal{H}^{d-1}(y)ds,\\
&&\displaystyle I_4=\int_{0}^{t-\delta}\int_{\partial \Omega_s}n_{x,t}\cdot \nabla_{x}G(x-hn_{x,t},t;y,s)\varphi(y,s)d\mathcal{H}^{d-1}(y)ds.
\end{eqnarray*}

For $I_1$, we can rewrite as follows: 
\begin{eqnarray*}
&&\displaystyle J(s,h)=\int_{E_s^{1}} \left( \frac{1}{\sqrt{2\pi(t-s)}}\right)^{d-1}\exp\left\{-\frac{|x-y|^2}{2(t-s)}\right\}\exp\left\{\frac{h\langle x-y ,n_{x,t}\rangle}{t-s}\right\}\varphi(y,s)d\mathcal{H}^{d-1}(y),\\
&&\displaystyle I_1 = \int_{t-\delta}^{t}\frac{h}{t-s}\frac{1}{\sqrt{2\pi(t-s)}}\exp\left\{-\frac{h^2}{2(t-s)}\right\}J(s,h)ds.
\end{eqnarray*}
By Lemma \ref{lem.1}, there is $C>0$ such that
\begin{eqnarray}
\label{329}
\displaystyle \frac{|\langle x-y,n_{x,t}\rangle|}{t-s}\leq \frac{|\langle x-\theta_s^{t}y+\theta_s^{t}y-y,n_{x,t}\rangle|}{t-s}\leq \frac{C}{(t-s)^{1-2\gamma}}.
\end{eqnarray}
for all $s$ sufficiently close to $t$ and all $y\in E_s^{1}$. 

Then by \eqref{329},
\begin{eqnarray}
\label{330}
 J(s,0)\exp\left\{-\frac{Ch}{(t-s)^{1-2\gamma}}\right\}\leq J(s,h)\leq J(s,0)\exp\left\{\frac{Ch}{(t-s)^{1-2\gamma}}\right\}.
\end{eqnarray}
By \eqref{330} and a change of variable as $z=\frac{h}{\sqrt{t-s}}$, we have
\begin{eqnarray}
\label{331}
&&\displaystyle \int_{\frac{h}{\sqrt{\delta}}}^{\infty}\frac{2}{\sqrt{2\pi}}\exp\left\{-\frac{|z|^2}{2}\right\}\exp\left\{-Cz(t-s)^{2\gamma-\frac{1}{2}}\right\}J(t-\frac{h^2}{z^2},0)dz\leq I_1\nonumber\\
&&\hspace{1cm} \leq \int_{\frac{h}{\sqrt{\delta}}}^{\infty}\frac{2}{\sqrt{2\pi}}\exp\left\{-\frac{|z|^2}{2}\right\}\exp\left\{Cz(t-s)^{2\gamma-\frac{1}{2}}\right\}J(t-\frac{h^2}{z^2},0)dz.
\end{eqnarray}
By Lemma \ref{lem.2}, given $\epsilon>0$, there is $\delta>0$ such that 
\begin{eqnarray}
\label{332}
|J(s,0)-\varphi(x,t)|<\epsilon
\end{eqnarray}
 for all $t-\delta<s<t$. Therefore, it follows by \eqref{331} and \eqref{332} that
\begin{eqnarray}
&&\displaystyle \int_{\frac{h}{\sqrt{\delta}}}^{\infty}\frac{2}{\sqrt{2\pi}}\exp\left\{-\frac{|z+C\delta^{2\gamma-\frac{1}{2}}|^2}{2}\right\}\exp\left\{\frac{C^2\delta^{4\gamma-1}}{2}\right\}(\varphi(x,t)-\epsilon)dz\leq I_1\nonumber\\
&&\hspace{0.5cm} \leq \int_{\frac{h}{\sqrt{\delta}}}^{\infty}\frac{2}{\sqrt{2\pi}}\exp\left\{-\frac{|z-C\delta^{2\gamma-\frac{1}{2}}|^2}{2}\right\}\exp\left\{\frac{C^2\delta^{4\gamma-1}}{2}\right\}(\varphi(x,t)+\epsilon)dz
\end{eqnarray}
and so we have
\begin{eqnarray}
\displaystyle\left|\lim_{\delta\rightarrow 0^{+}}\lim_{h\rightarrow 0^{+}}I_1-\varphi(x,t)\right|<\epsilon.
\end{eqnarray}
For $I_2$, by \eqref{329}, \eqref{330} and \eqref{332}, we have
\begin{eqnarray}
\displaystyle |I_2|\leq \int_{t-\delta}^{t}\frac{C_1}{(t-s)^{\frac{3}{2}-2\gamma}}\exp\left\{\frac{C^2\delta^{4\gamma-1}}{2} \right\}(\varphi(x,t)+\epsilon)ds\leq C_2\delta^{2\gamma-\frac{1}{2}}
\end{eqnarray}
and so it follows that $\displaystyle\lim_{\delta\rightarrow 0^{+}}\lim_{h\rightarrow 0^{+}}|I_{2}|=0$.\\
For $I_3$, all sufficiently $h>0$ such that $\left(\frac{2h}{\eta}\right)^{\frac{1}{\gamma}}<\delta$, we can decompose $I_3$ into $I_{3,1}+I_{3,2}$ as
\begin{eqnarray*}
\displaystyle I_{3,1}=\int_{t-\delta}^{t-\left(\frac{2h}{\eta}\right)^{\frac{1}{\gamma}}}\int_{E_s^{2}}\frac{\langle y-x+hn_{x,t},n_{x,t}\rangle}{t-s}  \left(\frac{1}{\sqrt{2\pi(t-s)}}\right)^{d}\exp\left\{-\frac{|x-hn_{x,t}-y|^2}{2(t-s)}\right\}\varphi(y,s)d\mathcal{H}^{d-1}(y)ds,\\
\displaystyle I_{3,2}=\int_{t-\left(\frac{2h}{\eta}\right)^{\frac{1}{\gamma}}}^{t}\int_{E_s^{2}}\frac{\langle y-x+hn_{x,t},n_{x,t}\rangle}{t-s}  \left(\frac{1}{\sqrt{2\pi(t-s)}}\right)^{d}\exp\left\{-\frac{|x-hn_{x,t}-y|^2}{2(t-s)}\right\}\varphi(y,s)d\mathcal{H}^{d-1}(y)ds.
\end{eqnarray*}

If $y\in E_s^{2}$ and $t-\delta<s<t-\left(\frac{2h}{\eta}\right)^{\frac{1}{\gamma}}$, then $|x-y|\geq\eta(t-s)^{\gamma}>2h$, so we have
\begin{eqnarray*}
&&\displaystyle|I_{3,1}|\leq C_3\int_{t-\delta}^{t-\left(\frac{2h}{\eta}\right)^{\frac{1}{\gamma}}}\frac{1}{(t-s)^{\frac{d}{2}}}\int_{E_s^{2}}\frac{|x-y|}{t-s}\exp\left\{-\frac{|x-y|^2}{8(t-s)}   \right\}d\mathcal{H}^{d-1}(y)ds\\
&&\hspace{1cm}\leq C_3\int_{t-\delta}^{t-\left(\frac{2h}{\eta}\right)^{\frac{1}{\gamma}}}\frac{1}{\eta(t-s)^{\gamma+\frac{d}{2}}}\int_{E_s^{2}}\frac{|x-y|^2}{t-s}\exp\left\{-\frac{|x-y|^2}{8(t-s)}   \right\}d\mathcal{H}^{d-1}(y)ds\\
&&\hspace{1cm}\leq C_4\int_{t-\delta}^{t-\left(\frac{2h}{\eta}\right)^{\frac{1}{\gamma}}}\int_{E_s^{2}}\frac{1}{(t-s)^{1-\gamma+\frac{d}{2} } }\exp\left\{-\frac{C_5}{(t-s)^{1-2\gamma}} \right\}d\mathcal{H}^{d-1}(y)ds
\end{eqnarray*}
and so it follows that $\displaystyle\lim_{\delta\rightarrow 0^{+}}\lim_{h\rightarrow 0^{+}}|I_{3,1}|=0$.\\
Let $y\in \partial \Omega_s$. For all sufficiently small $h>0$, if $t-\left(\frac{2h}{\eta}\right)^{\frac{1}{\gamma}}\leq s\leq t$ and $\gamma<\beta<\frac{1}{2}$, then 
\begin{eqnarray*}
&&\displaystyle |x-hn_{x,t}-y|\geq |x-hn_{x,t}-\theta_s^{t}y|-|\theta_s^{t}y-y|\geq d(x-hn_{x,t},\partial\Omega_t)-\eta(t-s)\\
&&\hspace{2cm}\geq h-C_6h^{2}-\eta(t-s)\geq (t-s)^{\beta},
\end{eqnarray*}
where $d(x-hn_{x,t},\partial\Omega_t)=\inf\{|x-hn_{x,t}-y|:y\in\partial\Omega_t\} \geq h-C_6h^{2}$ is by Lemma \ref{lem.1}.\\ 
Hence we get
\begin{eqnarray*}
&&|I_{3,2}|\\
&&\leq C_7\int_{t-\left(\frac{2h}{\eta}\right)^{\frac{1}{\gamma}}}^{t} \frac{1}{(t-s)^{\beta+\frac{d}{2}}}\int_{E_s^{2}}\frac{|x-hn_{x,t}-y|^2}{t-s}\exp\left\{-\frac{|x-hn_{x,t}-y|^2}{2(t-s)}   \right\}d\mathcal{H}^{d-1}(y)ds\\
&&\leq C_8\int_{t-\left(\frac{2h}{\eta}\right)^{\frac{1}{\gamma}}}^{t} \int_{E_s^{2}}\frac{1}{(t-s)^{1-\beta+\frac{d}{2} } }\exp\left\{-\frac{1}{2(t-s)^{1-2\beta}} \right\}d\mathcal{H}^{d-1}(y)ds
\end{eqnarray*}
and so it follows that $\displaystyle\lim_{h\rightarrow 0^{+}}|I_{3,2}|=0$.\\
For $I_4$, by Lebesgue's dominated convergence theorem, it follows that
\begin{eqnarray*}
\lim_{h\rightarrow 0^{+}}I_4=\int_{0}^{t-\delta}\int_{\partial \Omega_s}\frac{\partial G}{\partial n_{x,t}}(x,t;y,s)\varphi(y,s)d\mathcal{H}^{d-1}(y)ds.
\end{eqnarray*}
Finally, by combining all estimates above, we obtain 
\begin{eqnarray*}
\left|\lim_{\delta\rightarrow 0^{+}}\lim_{h\rightarrow 0^{+}}(I_1+I_2+I_3+I_4)-\varphi(x,t)-\int_{0}^{t}\int_{\partial \Omega_s}\frac{\partial G}{\partial n_{x,t}}(x,t;y,s)\varphi(y,s)d\mathcal{H}^{d-1}(y)ds\right|<\epsilon.
\end{eqnarray*}
Since $\epsilon$ is arbitrary, so the proof is complete.
\end{proof}
\begin{rem}
In Chapter 5 of \cite{FR}, the jump relation is proved for time independent domains and in Chapter 3 of \cite{LM}, it is also proved when the domain $D$ is given by $D=\{(Z,t): z_d>f(z_1,\cdots,z_{d-1},t)\}$ where $f:\mathbb{R}^{d}\rightarrow\mathbb{R}$ satisfies
\begin{eqnarray*}
&&|f(x,t)-f(y,t)|\leq a_1|x-y|,\ \ x,\ y\in\mathbb{R}^{d-1},\ t\in\mathbb{R},\\
&&\hspace{0.5cm}f(x,t)=I_{\frac{1}{2}}(b(x,\cdot))(t)=\int_{\mathbb{R}}|s-t|^{-\frac{1}{2}}b(x,s)ds
\end{eqnarray*}
where $x\in\mathbb{R}^{d-1}$ is fixed and $b(x,\cdot)$ is of bounded mean oscillation on $\mathbb{R}$.
\end{rem}
\vskip 0.3cm
\ The following Proposition depends on a local property of Brownian motion and the regularity of the boundary whose one dimensional version is done in \cite{JMLee}, which is about the accessibility of the boundary necessary to show Dirichlet boundary condition in Theorem \ref{thm3}.
\begin{prop}
\label{prop.3}
 If the starting point of Brownian motion is close to $X$, the first passage time converges to 0. Precisely, $\displaystyle\lim_{\Omega_0\ni\xi\rightarrow\xi_0\in\partial\Omega_0}P_{\xi}\left[\tau_{\xi}^{\Omega,v}> s\right]=0$ for all $s>0$.
\end{prop}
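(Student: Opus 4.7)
My plan is to exploit the exterior-ball condition, which follows from the $C^{2}$-regularity of $\partial\Omega_{0}$ at $\xi_{0}$, together with the velocity bound $M := \sup_{\overline{D_{T}}}|v|$, to reduce the statement to a one-dimensional first-passage estimate. I would construct for each small $\beta > 0$ a ``target'' region $H_{\beta}$ which lies outside $\overline{\Omega_{t}}$ for every $t$ in a short time window $[0, s_{\beta}]$ and sits at distance of order $\beta$ from $\xi_{0}$; once Brownian motion enters $H_{\beta}$ within this window, the first-passage time must already have occurred. The natural choice is $s_{\beta} := \beta^{3/2}$, since the BM fluctuation $\sqrt{s_{\beta}} = \beta^{3/4}$ dominates the target offset $\beta$ (making hitting likely) while the boundary moves at most $Ms_{\beta} = M\beta^{3/2} \ll \beta$ (too little to invade the target).

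In detail I would proceed as follows. First, fix local coordinates at $\xi_{0} = 0$ so that $\Omega_{0}$ is locally $\{x_{d} > F(x')\}$ with $F(0)=0$, $\nabla F(0)=0$; the exterior ball $\overline{B(-re_{d}, r)}$ tangent to $\partial\Omega_{0}$ at the origin gives the curvature bound $F(x') \geq -|x'|^{2}/(2r)$. Next, set
\[
H_{\beta} := \{x \in \mathbb{R}^{d} : x_{d} \leq -2\beta,\ |x'|^{2} < r\beta\},
\]
so that $F(x') > -\beta/2$ on the projection of $H_{\beta}$, whence $\mathrm{dist}(H_{\beta}, \overline{\Omega_{0}}) \geq 3\beta/2$. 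Since $\partial\Omega_{t}$ drifts at speed at most $M$, it stays within Hausdorff distance $Mt \leq M\beta^{3/2} < \beta/2$ of $\partial\Omega_{0}$ for $t \in [0, s_{\beta}]$, hence $H_{\beta} \cap \overline{\Omega_{t}} = \emptyset$ throughout this window. It follows that entering $H_{\beta}$ before $s_{\beta}$ forces $\tau_{\xi}^{\Omega,v} \leq s_{\beta}$, so $P_{\xi}[\tau_{\xi}^{\Omega,v} > s] \leq P_{\xi}[B \text{ avoids } H_{\beta} \text{ on } [0, s_{\beta}]]$ whenever $s \geq s_{\beta}$. To bound the hitting event, I would decompose it: let $T_{1} := \inf\{t \geq 0 : B_{t}^{(d)} \leq -2\beta\}$. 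The reflection principle gives
\[
P_{\xi}[T_{1} \leq s_{\beta}] = 2\bigl(1 - \Phi((2\beta + \xi_{d})/\sqrt{s_{\beta}})\bigr),
\]
which tends to $2(1 - \Phi(2\beta^{1/4}))$ as $\xi \to \xi_{0}$, and then to $1$ as $\beta \to 0$. Since $B'$ is independent of $B^{(d)}$, conditioning on $T_{1}$ gives $B'_{T_{1}} \sim N(\xi', T_{1}I_{d-1})$, and Markov's inequality yields
\[
P_{\xi}[|B'_{T_{1}}|^{2} \geq r\beta,\ T_{1} \leq s_{\beta}] \leq \frac{|\xi'|^{2} + (d-1)s_{\beta}}{r\beta} = \frac{|\xi'|^{2}}{r\beta} + \frac{(d-1)\sqrt{\beta}}{r},
\]
which vanishes as $\xi \to \xi_{0}$ and then $\beta \to 0$. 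Combining the two estimates gives $\liminf_{\xi \to \xi_{0}} P_{\xi}[B \text{ enters } H_{\beta} \text{ on } [0, s_{\beta}]] \to 1$ as $\beta \to 0$.

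The main technical obstacle I anticipate is making the static distance estimate $\mathrm{dist}(H_{\beta}, \overline{\Omega_{0}}) \geq 3\beta/2$ uniform in $t \in [0, s_{\beta}]$: one must know that the local exterior-ball radius at $\partial\Omega_{t}$ does not degenerate and that the local graph chart remains valid on this time window, both of which follow from the $C^{3}$-regularity of $\theta$ but need to be stated with constants independent of $t$. A secondary but related issue is the order-of-limits subtlety---$\xi \to \xi_{0}$ must be taken before $\beta \to 0$---which is delicate only in the bookkeeping. Once these geometric facts are in hand, the reduction to one-dimensional hitting plus Gaussian concentration is routine.
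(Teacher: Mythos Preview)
Your argument is correct and takes a genuinely different route from the paper's. The paper pulls the Brownian motion back to the fixed domain via $s\mapsto\theta_s^{0}B_s$, reduces to comparing one-dimensional first-passage times for the normal and tangential components, and uses the almost-sure sample-path fact $\limsup_{t\downarrow 0}\tilde B_t/\sqrt t=\infty$ to show that a barrier at level $\sim ca^{2}$ in the normal direction is hit before any tangential component reaches level $\sim a/\sqrt{d-1}$. You instead keep the moving domain, choose an explicit time scale $s_\beta=\beta^{3/2}$, freeze a target slab $H_\beta$ lying outside every $\overline{\Omega_t}$ for $t\in[0,s_\beta]$, and control the probability of missing $H_\beta$ by the reflection principle in the normal coordinate together with Markov's inequality on the tangential variance. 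Both proofs exploit the same scale separation---normal fluctuation $\sqrt s$ dominating curvature offset and boundary drift $\sim s$---but yours gives a quantitative rate and avoids the pullback diffeomorphism, at the price of the geometric check that $H_\beta$ stays exterior on $[0,s_\beta]$ (which, as you note, is where the uniform $C^2$ control of $\partial\Omega_t$ enters). The paper's approach sidesteps that check by working in frozen coordinates but is less explicit about convergence rates.
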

\begin{proof}
Without loss of generality, we may assume that $0\in\partial\Omega_0$ and the tangent plane of $\partial\Omega_0$ at $0$ and $e_d$ is the outward unit normal vector at $0$. A standard $d$-dimensional Brownian motion $B_t=(B_t^{(1)},\cdots,B_t^{(d)})$ which starts at $\xi=(\xi^{(1)},\cdots,\xi^{(d)})$. Since $\theta_t^{0}$ is a diffeomorphim, $B_t$ is out of $\Omega_t$ if and only if $\displaystyle\theta_t^{0} B_t=B_t-\int_{0}^{t}v(\theta_t^{s}B_t,s)ds$ is out of $\Omega_0$. We have $a>0$ such that $\displaystyle\mathcal{B}(0,a)\cap\partial\Omega_0 = \{x\in\mathbb{R}^{d}: x^{(d)}=F(x^{(1)},\cdots,x^{(d-1)})\} $ for some $F\in C^{2}$. Thus there is $c>0$ which does not depend on $a$ such that if $x\in\mathcal{B}(0,a)\cap\partial\Omega_0$, then $|x^{(d)}|\leq ca^2$. Since $\sup |v| <\eta$, we have
\begin{eqnarray}
\displaystyle\inf\{t\geq 0: (\theta_t^{0}B_t)^{(d)}=ca^2\}\leq \inf\{t\geq 0: B_t^{(d)}-\eta t=ca^{2}\}
\end{eqnarray}
and
\begin{eqnarray}
\displaystyle\inf\left\{t\geq 0: \left|((\theta_t^{0}B_t)^{(1)},\cdots,(\theta_t^{0}B_t)^{(d-1)})\right|=a\right\}&\geq&\inf\left\{t\geq 0: \left|(B_t^{(1)},\cdots,B_t^{(d-1)})\right|=a-\eta t\right\}\nonumber\\
&\geq& U_{\frac{a-\eta t}{\sqrt{d-1}}}^{(1)}\wedge\cdots\wedge U_{\frac{a-\eta t}{\sqrt{d-1}}}^{(d-1)},
\end{eqnarray}
where $\displaystyle U_{\frac{a-\eta t}{\sqrt{d-1}}}^{(i)}=\inf \left\{ t\geq 0: \left|B_t^{(i)}\right|=\frac{a-\eta t}{\sqrt{d-1}}\right\}$ for $1\leq i \leq d-1$. It satisfies that $\dfrac{a}{\sqrt{d-1}} > ca^2$ for all sufficiently small $a>0$  and for such fixed $a$, we have$\dfrac{a}{\sqrt{d-1}}-|\xi_i|\geq ca^2-\xi_d$ for all $\xi$ sufficiently close to $0$ and all $1\leq i\leq d-1$. 

For any standard one dimensional Brownian motion $\tilde{B}_t$ which starts at $0$, we have $\displaystyle\limsup_{t\downarrow 0}\frac{\tilde{B}_t}{\sqrt{t}}=\infty$, then there is a sequence ${t_k}\downarrow0$ such that $2\eta t_k\leq \tilde{B}_{t_k}$. Thus if we fix $a$ sufficiently small, then we obtain for all $\xi$ sufficiently close to $0$,
\begin{eqnarray}
\inf\left\{t\geq 0: \tilde{B}_t=ca^2+\eta t-\xi_d\right\}\leq \inf\left\{t\geq 0: \tilde{B}_t=\frac{a-\eta t}{\sqrt{d-1}}-\xi_i \right\}.
\end{eqnarray}
and since $\eta\geq\frac{\eta}{\sqrt{d-1}}$, we also get
\begin{eqnarray}
\inf\left\{t\geq 0: \tilde{B}_t=ca^2+\eta t-\xi_d\right\}\leq \inf\left\{t\geq 0: \tilde{B}_t=\frac{\eta t-a}{\sqrt{d-1}}-\xi_i \right\}.
\end{eqnarray}
 $\displaystyle U_{\frac{a-\eta t}{\sqrt{d-1}}}^{(i)}$ has the same law with $\inf\left\{t\geq 0: \tilde{B}_t=\frac{a-\eta t}{\sqrt{d-1}}-\xi_i \right\}\wedge\inf\left\{t\geq 0: \tilde{B}_t=\frac{\eta t-a}{\sqrt{d-1}}-\xi_i \right\}$ and also $\inf\left\{t\geq 0: \tilde{B}_t=ca^2+\eta t-\xi_d\right\}$ has the same law with $\inf\{t\geq 0: B_t^{(d)}-\eta t=ca^{2}\}$, therefore, we conclude that
 \begin{eqnarray}
 \displaystyle\inf\{t\geq 0: (\theta_t^{0}B_t)^{(d)}=ca^2\}\leq\inf\left\{t\geq 0: \left|(\theta_t^{0}B_t)^{(1)},\cdots,(\theta_t^{0}B_t)^{(d-1)})\right|=a\right\}.
 \end{eqnarray}
Since we can fix $a$ arbitrary small so that for any $s>0$, the first passage time can be smaller than $s$ as the starting point $\xi$ is sufficiently close to $0$. The proof is complete.
\end{proof}

\begin{proof}\hspace{-1mm}
\textbf{of Theorem~\ref{thm3}}\\
Using the invariance of the law of the Brownian motion under time reversal, we have
\begin{eqnarray}
\displaystyle u(x,t)=\int_{\Omega_0} u_0(\xi)G_{0,t}^{\Omega,v}(\xi,x)d\xi=E_{x}[u_0(B_t);\tau_x^{\Omega_t, -v}\geq t],
\end{eqnarray}
where $\tau^{\Omega_t, -v}$ denote the first passage time when the time varying domain starts with $\Omega_t$ and changes with respect to $-v$. Using this equality, we also have
\begin{eqnarray}
\displaystyle\left|u(x,t)\right|=\left|E_{x}[u_0(B_t);\tau_x^{\Omega_t,-v}\geq t]\right|\leq \lVert u_0\rVert_{\infty}P_{x}[\tau_x^{\Omega_t,-v}\geq t].
\end{eqnarray} 
Thus by Proposition \ref{prop.2}, we have $u(x,t)\rightarrow 0$ as $x$ approaches to $\partial\Omega_t$.\\\
Let us prove that $u$ satisfies the initial data $u_0$, that is, $\displaystyle\lim_{(x,t)\rightarrow (y,0)}u(x,t)=u_0(y)$ for all $y\in\Omega_0$. Fix $y\in\Omega_0$ and without loss of generality, we may assume that $y=0$ by shifting the origin. For any $x\in\Omega_0$ and any positive $\lambda>0$,
\begin{eqnarray}
\label{3.13}
&&\displaystyle P_{x}\left[\tau_x^{\Omega_t,-v}< t\right]\leq P_{x}\left[\max_{s\in[0,t]}|\theta_t^{s}B_s-x|\geq d(x,\partial\Omega_t)\right]\leq P_{x}\left[\max_{s\in[0,t]}|B_s|\geq d(x,\partial\Omega_t)-|x|-\eta t\right]\nonumber\\
&&=P_{x}\left[\max_{s\in[0,t]}\exp{\{\lambda |B_s|\}}\geq \exp\{\lambda(d(x,\partial\Omega_t)-|x|-\eta t)\}\right].
\end{eqnarray} 
Since $\exp\{\lambda |B_t|\}$ is a positive submartingale, we can apply Doob's inequality, then
\begin{eqnarray}
\label{3.14}
P_{x}\left[\max_{s\in[0,t]}\exp{\{\lambda |B_s|\}}\geq \exp\{\lambda(d(x,\partial\Omega_t)-|x|-\eta t)\}\right]\leq\frac{  E_{x}[\exp{(\lambda |B_t|)}]   }{ \exp\{\lambda(d(x,\partial\Omega_t)-|x|-\eta t)\} }.\nonumber\\
\end{eqnarray} 
By $(\ref{3.13})$ and $(\ref{3.14})$, we obtain
\begin{eqnarray}
\displaystyle\lim_{(x,t)\rightarrow (y,0)}P_{x}\left[\tau_x^{\Omega_t,-v}< t\right] \leq \exp\{-\lambda d(y,\partial\Omega_0)\}
\end{eqnarray} 
so that the left hand side vanishes since $\lambda > 0$ is arbitrary. Thus we deduce that
\begin{eqnarray}
\displaystyle \lim_{(x,t)\rightarrow (y,0)}\int_{\Omega_0}u_0(\xi)G_{0,t}^{\Omega,v}(\xi,x)d\xi=\lim_{(x,t)\rightarrow (y,0)}E_{x}[u_0(B_t)]=u_0(y).
\end{eqnarray} 
Since the Gaussian kernel satisfies the heat equation, we conclude that $u$ is the unique solution of Theorem \ref{thm1}. \\
To show $(\ref{888})$, let us fix $(x,t)\in D_T$. Applying Green's formula, we have
\begin{eqnarray}
\label{3.32}
&&\displaystyle \int_{\Omega_s}u(y,s)\Delta_y G(x,t;y,s)-G(x,t;y,s)\Delta_y u(y,s)dy\nonumber\\
&&\hspace{2cm}=\int_{\partial\Omega_s}u(y,s)\frac{\partial G}{\partial n_{y,s}}(x,t;y,s)-G(x,t;y,s)\frac{\partial u}{\partial n}(y,s)d\mathcal{H}^{d-1}(y).
\end{eqnarray}
Since $u$ is $0$ on the boundary, \eqref{3.32} becomes
\begin{eqnarray}
\label{3.33}
&&\displaystyle \int_{\Omega_s}u(y,s)\Delta_y G(x,t;y,s)-G(x,t;y,s)\Delta_y u(y,s)dy=\int_{\Omega_s}-2(uG)_s dy=-2\frac{\partial}{\partial s}\int_{\Omega_s}uG dy\nonumber\\
&&\hspace{2cm}=\int_{\partial\Omega_s}-G(x,t;y,s)\frac{\partial u}{\partial n}(y,s)d\mathcal{H}^{d-1}(y).
\end{eqnarray}
By integrating the last two terms of \eqref{3.33} with respect to $s$ from $0$ to $t$, we obtain another different representation of $u$ as follows:
\begin{eqnarray}
\label{777}
\displaystyle u(x,t)=\int_{\Omega_0}u_0(\xi)G(x,t;\xi,0)d\xi+\frac{1}{2}\int_{0}^{t}\int_{\partial\Omega_s}G(x,t;y,s)\frac{\partial u}{\partial n}(y,s)d\mathcal{H}^{d-1}(y)ds.
\end{eqnarray}
Taking normal derivatives of both sides in $(\ref{777})$ and applying the jump relation, we get
\begin{eqnarray}
\displaystyle \frac{1}{2}\frac{\partial u}{\partial n}(x,t)=\int_{\Omega_0}u_0(\xi)\frac{\partial G}{\partial n_{x,t}}(x,t;\xi,0)d\xi+\frac{1}{2}\int_{0}^{t}\int_{\partial\Omega_s}\frac{\partial G}{\partial n_{x,t}}(x,t;y,s)\frac{\partial u}{\partial n}(y,s)d\mathcal{H}^{d-1}(y)ds\nonumber\\
\end{eqnarray}
which implies $(\ref{888})$.
\end{proof}

\vskip1cm
\section{Proof of \textbf{of Theorem~\ref{mainthm}}} 
\label{Fsec.4}
\ Comparing the definition \eqref{2345} of $u$ and $(\ref{777})$, using \eqref{12345}, we see the following equality:
\begin{eqnarray}
\displaystyle \int_{[0,t)}\int_{\partial\Omega_s}G_{s,t}(y,x)\int_{\Omega_0}u_0(\xi)dF_{\xi}(y,s)d\xi=-\frac{1}{2}\int_{0}^{t}\int_{\partial\Omega_s}G(x,t;y,s)\frac{\partial u}{\partial n}(y,s)d\mathcal{H}^{d-1}(y)ds.\nonumber\\
\end{eqnarray}
Let us denote $\displaystyle dF_{u_0}(y,s):=\int_{\Omega_0} u_0(\xi) dF_{\xi}(y,s)d\xi$. 
\begin{prop}
\label{new}
$\displaystyle dF_{u_0}(y,s)=-\frac{1}{2}\frac{\partial u}{\partial n}(y,s)d\mathcal{H}^{d-1}(y)ds$.
\end{prop}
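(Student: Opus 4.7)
The identity derived just before the proposition can be rewritten as
\begin{equation*}
\int_{[0,t)}\!\int_{\partial\Omega_s}\!G(x,t;y,s)\,d\mu(y,s)=0,\qquad (x,t)\in D_T,
\end{equation*}
where $\mu:=dF_{u_0}-\psi\,d\mathcal H^{d-1}\otimes ds$ is a finite signed measure on $S_T$ and $\psi(y,s):=-\tfrac12\tfrac{\partial u}{\partial n}(y,s)$ is continuous on $\overline{D_T}$ by Theorem~\ref{thm1}. The proposition is then exactly the assertion $\mu=0$.

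My plan is to combine continuity of the single-layer heat potential across $S_T$, uniqueness for the heat equation on the exterior region, and the jump relation (Proposition~\ref{prop.2}). Writing $V(x,t)$ for the single-layer potential of $\mu$, one checks that $V$ solves $\partial_tV=\tfrac12\Delta_xV$ on $(\mathbb{R}^d\times(0,T))\setminus S_T$, that $V(\cdot,0)\equiv 0$, and that $V(x,t)\to 0$ as $|x|\to\infty$ since $\mu$ is finite and $G$ has Gaussian decay. The hypothesis gives $V\equiv 0$ on $D_T$. Because single-layer heat potentials are continuous across $S_T$ (the Gaussian bounds assembled in the proof of Proposition~\ref{prop.2} apply uniformly to any finite measure), $V$ extends continuously to $\overline{D_T}$ and hence $V\equiv 0$ on $S_T$. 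The restriction of $V$ to the exterior $\{(x,t):x\notin\overline{\Omega_t},\,0<t\le T\}$ then solves the heat equation with zero lateral and initial data and spatial decay; the maximum principle forces $V\equiv 0$ on the exterior as well.

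With $V\equiv 0$ on both sides of $S_T$, the one-sided limits of $n_{x_0,t}\!\cdot\!\nabla_xV$ across $S_T$ both vanish. The jump formula for single-layer heat potentials (Proposition~\ref{prop.2} for the inside limit, together with the outside analogue obtained by the same calculation) identifies the difference of these two limits with twice the boundary density of $\mu$, so that density is zero and $\mu=0$. The main obstacle is precisely this last step: Proposition~\ref{prop.2} is stated for $\varphi\in C(S_T)$, whereas $dF_{u_0}$ is a priori only a finite measure. I would close the gap either by approximating $dF_{u_0}$ in total variation by measures with continuous densities (e.g.\ via mollification along $S_T$), applying Proposition~\ref{prop.2} to each approximation and passing to the limit using dominated convergence against the weakly singular kernel, or by using the vanishing of $V$ on the exterior to derive the Volterra-type identity \eqref{888} for the would-be continuous density of $dF_{u_0}$; uniqueness of continuous solutions of \eqref{888}, already used in Theorem~\ref{thm3}, then forces that density to equal $\psi$, completing the proof.
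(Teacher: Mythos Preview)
Your approach differs substantially from the paper's, and the gap you flag is genuine and not closed by your proposed fixes.

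The paper's proof is far more elementary and never touches the jump relation or the exterior problem. It simply integrates the representation \eqref{777} over $x\in\mathbb{R}^d$, uses Fubini together with $\int_{\mathbb{R}^d}G(x,t;y,s)\,dx=1$, and obtains
\[
\int_{\Omega_t}u(x,t)\,dx=\int_{\Omega_0}u_0(\xi)\,d\xi+\tfrac12\int_0^t\!\int_{\partial\Omega_s}\tfrac{\partial u}{\partial n}\,d\mathcal H^{d-1}\,ds .
\]
Computing the left side probabilistically as $\int_{\Omega_0}u_0(\xi)\,P_\xi[\tau\ge t]\,d\xi$ gives the equality of the two measures on every time slab. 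Note that the paper's argument as written only establishes that the \emph{time marginals} of $dF_{u_0}$ and of $-\tfrac12\tfrac{\partial u}{\partial n}\,d\mathcal H^{d-1}\,ds$ coincide---strictly weaker than the proposition as stated, but precisely what is used afterward (only quantities of the form $P_\xi[\tau\in I]$ appear in the sequel).

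As for your route: the assertion that single-layer heat potentials of an \emph{arbitrary} finite measure are continuous across $S_T$ is false (a Dirac mass at a boundary point $(y_0,s_0)$ gives $V(x,t)=G(x,t;y_0,s_0)$, which blows up there), so that step requires the specific identities available here rather than a general principle. More seriously, neither of your proposed closures for the jump-relation step works. Mollifying a finite measure along $S_T$ yields weak-$*$ convergence, not convergence in total variation, so you cannot pass to the limit in Proposition~\ref{prop.2} by dominated convergence as you suggest. And the alternative---derive the Volterra identity \eqref{888} for the ``would-be continuous density'' of $dF_{u_0}$ and invoke uniqueness---presupposes that $dF_{u_0}$ already has a continuous density, which is exactly what the proposition asserts; this is circular.
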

\ For the proof of Proposition \ref{new}, we introduce the mass lost $\displaystyle \Delta_{I}^{\Omega,v}(u)$, $I=[t_1,t_2]\subset [0,T]$, $t_1\leq t_2$, is defined by
\begin{eqnarray}
\displaystyle \Delta_{I}^{\Omega,v}(u)=\int_{\Omega_{t_1} }u(x,t_1)dx-\int_{\Omega_{t_2}}u(x,t_2)dx.
\end{eqnarray}
If we see the right hand side of $(\ref{777})$, we can extend $u$ to $\bar{u}$ defined in $\{(x,t): x\in\mathbb{R}^{d},\ 0<t\leq T\}$ as 
\begin{eqnarray}
\displaystyle \bar{u}(x,t)=\int_{\Omega_0}u_0(\xi)G(x,t;\xi,0)d\xi+\frac{1}{2}\int_{0}^{t}\int_{\partial\Omega_s}G(x,t;y,s)\frac{\partial u}{\partial n}(y,s)d\mathcal{H}^{d-1}(y)ds.
\end{eqnarray}
Then this satisfies the heat equation with $\displaystyle \lim_{(x,t)\rightarrow (y,0)}\bar{u}(x,t)=0$ for all $\displaystyle y\in \Omega_0^{c}$ and also satisfies $\bar{u}(x,t)=0$ for all $0<t\leq T$ and all $x\in\partial\Omega_t$. Moreover, by the properties of Gaussian kernel, we have 
\begin{eqnarray}
\displaystyle \lim_{|x|\rightarrow \infty}\sup_{0< t\leq T}|\bar{u}(x,t)|=0.
\end{eqnarray}
It follows that $\bar{u}(x,t)=0$ in $\{(x,t): x\in\Omega_t^{c}, 0< t\leq T\}$ by the weak maximum(minimum) principle. Thus we assume that $u$ is defined $\{(x,t): x\in\mathbb{R}^{d},\ 0\leq t\leq T\}$ such that it is $0$ in $\{(x,t): x\in\Omega_t^{c},\ 0\leq t\leq T\}$.
\\ 
Heuristically
\begin{eqnarray*}
\displaystyle \Delta_{I}^{X}(u)= -\int \int_{t_1}^{t_2} u_t(x,t)dtdx= -\int \int_{t_1}^{t_2}\frac{1}{2}\Delta_x u(x,t)dxdt= -\frac{1}{2}\int_{t_1}^{t_2}\int_{\partial\Omega_t} \frac{\partial u}{\partial n}(x,t)d\mathcal{H}^{d-1}(x)dt.
\end{eqnarray*}
Since we do not control $\Delta_x u$ at the moving boundary, we cannot make this argument rigorously. Thus we use a different approach.\\

\begin{proof}\hspace{-1mm}
\textbf{of Proposition~\ref{new}}
\vskip 0.1cm
It suffices to show
$$\displaystyle -\frac{1}{2}\int_{I}\int_{\partial\Omega_t}\frac{\partial u}{\partial n}(x,t)d\mathcal{H}^{d-1}(x)dt=\Delta_{I}^{\Omega,v}(u)=\int_{I}\int_{\partial\Omega_t}dF_{u_0}(x,t).$$
If we integrate both sides of $(\ref{777})$, then
\begin{eqnarray*}
\displaystyle \int_{\mathbb{R}^{d}}u(x,t)dx= \int_{\mathbb{R}^{d}}\int_{\Omega_0}u_0(\xi)G(x,t;\xi,0)d\xi dx+\frac{1}{2} \int_{\mathbb{R}^{d}}\int_{0}^{t}\int_{\partial\Omega_s}G(x,t;y,s)\frac{\partial u}{\partial n}(y,s)d\mathcal{H}^{d-1}(y)ds.
\end{eqnarray*}
Applying Fubini's theorem, we get
\begin{eqnarray}
\displaystyle   \int_{\Omega_t}u(x,t)dx=\int_{\mathbb{R}^{d}}u(x,t)dx= \int_{\Omega_0}u_0(\xi)d\xi+\frac{1}{2}\int_{0}^{t}\int_{\partial\Omega_s}\frac{\partial u}{\partial n}(y,s)d\mathcal{H}^{d-1}(y)ds.
\end{eqnarray}
Thus we get the first equality of the proposition,
\begin{eqnarray}
\displaystyle \Delta_{[t_1,t_2]}^{X}(u)=-\frac{1}{2}\int_{t_1}^{t_2}\int_{\partial\Omega_s}\frac{\partial u}{\partial n}(y,s)d\mathcal{H}^{d-1}(y)ds.
\end{eqnarray}
By $(\ref{1.0})$ and item $\ref{1.1}$ of Theorem \ref{mainthm}, we get
\begin{eqnarray}
\displaystyle P_{\xi}[\tau_\xi^{X} \geq t]=\int_{\Omega_t}G_{0,t}^{\Omega,v}(\xi,x)dx.
\end{eqnarray}
For $0=t_1<t_2$, using Fubini's theorem again, we get
\begin{eqnarray*}
\displaystyle \Delta_{I}^{\Omega,v}(u)&=&\int_{\Omega_0}u_0(\xi)d\xi-\int_{\Omega_{t_2}}\int_{\Omega_0} u_0(\xi)G_{0,t_2}^{\Omega,v}(\xi,x)d\xi dx\\
&=&\int_{\Omega_0}u_0(\xi)d\xi-\int_{-\infty}^{0}u_0(\xi)P_{\xi}[\tau_\xi^{\Omega,v} \geq t_{2}]d\xi=\int_{-\infty}^{0}u_0(\xi)P_{\xi}[0\leq\tau_\xi^{X} < t_{2}]d\xi.\\
\end{eqnarray*}
For $0<t_1<t_2$, similarly,
\begin{eqnarray*}
\displaystyle \Delta_{I}^{\Omega,v}(u)&=&\int_{\Omega_{t_1}}\int_{\Omega_0} u_0(\xi)G_{0,t_1}^{\Omega,v}(\xi,x)d\xi dx-\int_{\Omega_{t_2}}\int_{\Omega_0} u_0(\xi)G_{0,t_2}^{\Omega,v}(\xi,x)d\xi dx\\
&=&\int_{\Omega_0}u_0(\xi)P_{\xi}[t_1\leq \tau_\xi^{\Omega,v} < t_{2}]d\xi.
\end{eqnarray*}
Then for $I_\epsilon=[t_2, t_2+\epsilon]$, we get
\begin{eqnarray*}
\displaystyle \lim_{\epsilon\rightarrow 0}\Delta_{I_\epsilon}^{\Omega,v}(u)&=&\lim_{\epsilon\rightarrow 0}-\frac{1}{2}\int_{I_\epsilon}\int_{\partial\Omega_t}\frac{\partial u}{\partial n}(x,t)d\mathcal{H}^{d-1}(x)dt=0=\lim_{\epsilon\rightarrow 0}\int_{\Omega_0}u_0(\xi)P_{\xi}[t_2\leq \tau_\xi^{\Omega,v} < t_2+\epsilon]d\xi\\
&=&\int_{\Omega_0}u_0(\xi)P_{\xi}[\tau_\xi^{\Omega,v} = t_{2}]d\xi.
\end{eqnarray*}
Finally we conclude that
\begin{eqnarray}
\displaystyle \Delta_{I}^{\Omega,v}(u)=\int_{\Omega_0}u_0(\xi)P_{\xi}[t_1\leq \tau_\xi^{X}\leq t_2]d\xi=\int_{I}\int_{\partial\Omega_s}dF_{u_0}(y,s)ds.
\end{eqnarray}
\end{proof}



By approximating the initial delta measure of Theorem \ref{mainthm}, we prove the proposition below.

\begin{prop}
\label{prop.5}
Let us assume the starting point of Brownian motion $r_0=0\in\Omega_0$ and let us choose a sequence $\displaystyle\{h_m\}\subset C_c^{\infty}(\mathbb{R}^{d};\mathbb{R}_{+})$ with $\textrm{supp}\hspace{0.1cm}{h_m}=\mathcal{B}(0, \frac{1}{m})\Subset \Omega_0$ and $\displaystyle \lVert h_m \rVert_{1}=1$. For each $h_m$, there exists a corresponding solution $\displaystyle u_{m}$ with $\displaystyle -\frac{1}{2}\frac{\partial u_m}{\partial n}(x,t)=:p_{m}(x,t)$ in the sense of Theorem \ref{thm3}. Then we have the following statements:
\begin{enumerate}
\label{100} 
\item There is a unique $p\in C(S_T)$ with $\displaystyle p(\cdot,0)=0$ such that for all $t>0$ and all $x\in\Omega_t$,
\begin{eqnarray}
\label{111}
\displaystyle p(x,t)=-\frac{\partial G}{\partial n _{x,t}}(x,t;0,0)+\int_0^{t}\int_{\partial\Omega_s}\frac{\partial G}{\partial n_{x,t}}(x,t;y,s)p(y,s)d\mathcal{H}^{d-1}(y)ds.
\end{eqnarray}
\label{222}
\item $p_m$ converges to $p$ in sup norm.
\end{enumerate}
\end{prop}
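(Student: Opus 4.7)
The plan is to read (\ref{111}) as a linear Volterra integral equation of the second kind on the lateral boundary $S_T$ with weakly singular kernel $K(x,t;y,s) := \frac{\partial G}{\partial n_{x,t}}(x,t;y,s)$, and then to exploit the fact (already implicit in the proof of Proposition~\ref{prop.2}) that its singularity has integrable order strictly less than one. Concretely, the decomposition $\partial\Omega_s = E_s^1 \cup E_s^2$ used there, together with Lemma~\ref{lem.1} applied to $\theta_s^t y \in \partial\Omega_t$ and the velocity bound $|y - \theta_s^t y| \leq C(t-s)$, yields the pointwise control
$$\left|\frac{\langle y - x, n_{x,t}\rangle}{t-s}\right| \leq \frac{C}{(t-s)^{1-2\gamma}} \qquad \textrm{for } y \in E_s^1,$$
which, combined with Lemma~\ref{lem.2} for the surface integral on $E_s^1$ and the rapid Gaussian decay on $E_s^2$, should give a uniform estimate
$$\int_{\partial\Omega_s} |K(x,t;y,s)|\, d\mathcal{H}^{d-1}(y) \leq \frac{C}{(t-s)^{\alpha}}, \qquad (x,t) \in S_T,\ 0 \leq s < t,$$
for any $\alpha \in (1/2, 1)$ (take $\gamma \in (1/4, 1/2)$ with $\alpha := 3/2 - 2\gamma$). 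Compactness of $\partial\Omega$ and $C^2$ regularity of $v$ ensure that $C$ and $\alpha$ may be chosen independent of $(x,t,s)$.

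With this kernel estimate in hand, I would establish existence and uniqueness by Picard iteration in the Banach space $X := \{\psi \in C(S_T) : \psi(\cdot, 0) = 0\}$, which is closed in $C(S_T)$ (and $S_T$ is compact since $\partial\Omega$ is compact and $T < \infty$). The forcing $f_0(x,t) := -\frac{\partial G}{\partial n_{x,t}}(x,t;0,0)$ lies in $X$ because $0 \in \Omega_0$ is interior, so $|x - 0| \geq d(0, \partial\Omega_t) > 0$ uniformly in $t \in [0,T]$, and the Gaussian factor $e^{-|x|^2/(2t)}$ vanishes as $t \to 0^+$. The operator
$$T\psi(x,t) := f_0(x,t) + \int_0^t \int_{\partial\Omega_s} K(x,t;y,s) \psi(y,s)\, d\mathcal{H}^{d-1}(y)\, ds$$
maps $X$ to $X$: continuity follows from dominated convergence using the bound above, while at $t = 0$ the integral part is dominated by $C\|\psi\|_\infty\, t^{1-\alpha}/(1-\alpha) \to 0$. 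Iterating and using the Beta-function identity as in the classical theory of weakly singular Volterra equations yields
$$\|T^n\psi - T^n\varphi\|_\infty \leq \frac{(C\,\Gamma(1-\alpha))^n}{\Gamma(n(1-\alpha)+1)}\, T^{n(1-\alpha)} \|\psi-\varphi\|_\infty,$$
so a sufficiently high iterate of $T$ is a strict contraction on $X$ and $T$ has a unique fixed point $p \in X$.

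For the convergence $p_m \to p$, Theorem~\ref{thm3} says that $p_m$ satisfies the same Volterra equation but with forcing $f_m(x,t) := -\int_{\Omega_0} h_m(\xi)\, \frac{\partial G}{\partial n_{x,t}}(x,t;\xi,0)\, d\xi$, and $p_m \in X$ because $u_m(\cdot, 0) = h_m$ is supported in $\mathcal{B}(0, 1/m) \Subset \Omega_0$, so $\partial u_m/\partial n = 0$ on $\partial\Omega_0$. Since $0$ is interior to $\Omega_0$, the map $(x,t,\xi) \mapsto \frac{\partial G}{\partial n_{x,t}}(x,t;\xi,0)$ is jointly continuous, hence uniformly continuous, on the compact set $S_T \times \overline{\mathcal{B}(0, r_0)}$ for some small $r_0 > 0$; as $\textrm{supp}\, h_m \subset \mathcal{B}(0, 1/m)$ and $\|h_m\|_1 = 1$, this forces $\|f_m - f_0\|_{C(S_T)} \to 0$. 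Writing $p - p_m = (f_0 - f_m) + \mathcal{K}(p - p_m)$, where $\mathcal{K}$ denotes the integral part of $T$, and applying the same contraction estimate to $(I - \mathcal{K})^{-1}$ gives $\|p_m - p\|_\infty \leq C\|f_m - f_0\|_\infty \to 0$.

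The main obstacle will be the clean derivation of the uniform kernel bound: one must combine Lemmas~\ref{lem.1} and \ref{lem.2} so that all constants are genuinely independent of $(x,t) \in S_T$ and $s \in [0,t)$, which is where compactness of $\partial\Omega$ and the $C^2$ regularity of the boundary and velocity field become essential. Once this uniform bound is secured, the rest is a textbook application of the theory of weakly singular Volterra equations.
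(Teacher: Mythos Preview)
Your proposal is correct and rests on the same weakly singular kernel estimate as the paper---namely $\int_{\partial\Omega_s}\bigl|\partial G/\partial n_{x,t}\bigr|\,d\mathcal{H}^{d-1}(y)\le C(t-s)^{-(3/2-2\gamma)}$ uniformly on $S_T$, extracted from Lemmas~\ref{lem.1} and~\ref{lem.2}---but the way you exploit it differs. For existence and uniqueness the paper does not use iterated kernels and the Beta-function decay of $T^n$; instead it chooses $T_s$ so small that the map is already a contraction on $C(S_{T_s})$, then freezes the solution on $[0,T_s]$ and repeats on $[T_s,T^\star]$ with $T^\star-T_s$ small, continuing inductively to reach $T$. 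Your global Picard argument is the cleaner textbook alternative, yielding the bounded resolvent $(I-\mathcal{K})^{-1}$ at once; the paper's time-stepping is more elementary in that it needs only the ordinary contraction principle. For the convergence $p_m\to p$ the paper likewise proceeds by time-steps and controls the forcing via the mean value theorem together with the explicit pointwise bounds $|\partial_i\partial_j G(x,t;\xi,0)|\le C\min\{|x-\xi|^{-(d+2)},\,t^{-(d/2+1)}\}$, obtaining $\sup_{|\xi|\le 1/m}|f_m(x,t)-f_0(x,t)|\le C m^{-1}(|x|-1/m)^{-(d+2)}$ on the first interval and $C m^{-1}T_s^{-(d/2+1)}$ on later ones. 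Your appeal to uniform continuity of $(x,t,\xi)\mapsto\partial G/\partial n_{x,t}(x,t;\xi,0)$ on the compact set $S_T\times\overline{\mathcal{B}(0,r_0)}$ reaches the same conclusion without these explicit bounds; it is legitimate because Gaussian decay forces this map to extend continuously by zero to $t=0$ once $|x-\xi|$ is bounded below, which holds since $\mathcal{B}(0,r_0)\Subset\Omega_0$.
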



\begin{proof}\hspace{-1mm}
\textbf{of Proposition~\ref{prop.5}}\\
For all $T$ sufficiently small, by Lemma \ref{lem.1} and \ref{lem.2}, we obtain that there is $C_1>0$ such that for all $0<t\leq T$ and all $x\in\partial\Omega_t$,
\begin{eqnarray}
\displaystyle \left|\int_{0}^{t}\int_{\partial\Omega_\tau}\frac{\partial G}{\partial n_{x,t}}(x,t;y,\tau)d\mathcal{H}^{d-1}(y)d\tau\right|\leq C_1(t-\tau)^{2\gamma-\frac{1}{2}}.
\end{eqnarray}
For $T_s>0$, we define $\displaystyle \mathcal{F}:C(S_{T_s})\rightarrow C(S_{T_s})$ such that for $q\in C(S_{T_s})$,
\begin{eqnarray}
\displaystyle (\mathcal{F}q)(x,t)=-\frac{\partial G}{\partial n _{x,t}}(x,t;0,0)+\int_0^{t}\int_{\partial\Omega_\tau}\frac{\partial G}{\partial n_{x,t}}(x,t;y,\tau)q(y,\tau)d\mathcal{H}^{d-1}(y)d\tau,
\end{eqnarray}
and  $(\mathcal{F}q)(\cdot,0)=0$. If we choose $T_s$ sufficiently small, then $\mathcal{F}$ is a contraction mapping so that $\mathcal{F}$ has a unique fixed point. Let's call this $\displaystyle p_{T_s}$.\\
Now we have $p_{T_s}$ for some $T_s>0$. For $\displaystyle T^{\star}>T_s$, let us denote $\displaystyle S_{[T_s,T^{\star}]}:=\bigcup_{T_s\leq t\leq T^{\star}}\partial\Omega_t\times\{ t \}$. We define $\displaystyle \mathcal{K}:C(S_{[T_s,T^{\star}]})\rightarrow C(S_{[T_s,T^{\star}]})$ as, for $q\in C(S_{[T_s,T^{\star}]})$,
\begin{eqnarray*}
&&\displaystyle (\mathcal{K}q)(x,t)=-\frac{\partial G}{\partial n _{x,t}}(x,t;0,0)+\int_0^{T_s}\int_{\partial\Omega_\tau}\frac{\partial G}{\partial n_{x,t}}(x,t;y,\tau)p_{T_s}(y,\tau)d\mathcal{H}^{d-1}(y)d\tau\\
&&\hspace{2cm}+\int_{T_s}^{t}\int_{\partial\Omega_\tau}\frac{\partial G}{\partial n_{x,t}}(x,t;y,\tau)q(y,\tau)d\mathcal{H}^{d-1}(y)d\tau.
\end{eqnarray*}
Then for $\displaystyle q_1, q_2\in C(S_{[T_s,T^{\star}]})$, we have
\begin{eqnarray}
\displaystyle \lVert \mathcal{K}q_1-\mathcal{K}q_2\rVert_{\infty}\leq C_2(t-T_s)^{2\gamma-\frac{1}{2}}\lVert q_1-q_2\rVert_{\infty}\leq C_2(T^{\star}-T_s)^{2\gamma-\frac{1}{2}}\lVert q_1-q_2\rVert_{\infty}.
\end{eqnarray}
Similarly, if we choose $\displaystyle T^{\star}$ such that $\displaystyle  C_2(T^{\star}-T_s)^{2\gamma-\frac{1}{2}}<1$, then $\mathcal{K}$ is a contraction mapping so that $\mathcal{K}$ has a unique fixed point.\\
Therefore, if we have $p$ defined $S_{T_s}$, $p_{T_s}$,  then we can extend this to time $T_s+C_3$ where $C_3$ is a constant. Thus if we repeat this step inductively, we have $p$ defined on $S_T$ which satisfies $(\ref{111})$.
\\
We now prove that $p_m$ converges to $p$ in sup norm for all sufficiently small $T_s>0$. By $(\ref{888})$, 
\begin{eqnarray}
\label{999}
\displaystyle p_m(x,t)=-\int_{\Omega_0}h_m(\xi)\frac{\partial G}{\partial n_{x,t}}(x,t;\xi,0)d\xi+\int_{0}^{t}\int_{\partial\Omega_\tau}\frac{\partial G}{\partial n_{x,t}}(x,t;y,\tau)p_m(y,\tau)d\mathcal{H}^{d-1}(y)d\tau.\nonumber\\
\end{eqnarray}
For $0<t\leq T_s$, taking the difference between $(\ref{111})$ and $(\ref{999})$, we get
\begin{eqnarray*}
|p_m(x,t)-p(x,t)|&\leq& \left|\int_{\Omega_0}h_m(\xi)\left(\frac{\partial G}{\partial n_{x,t}}(x,t;\xi,0)-\frac{\partial G}{\partial n_{x,t}}(x,t;0,0)\right)d\xi\right|\\
&+&\left|\int_{0}^{t}\int_{\partial\Omega_\tau}\frac{\partial G}{\partial n_{x,t}}(x,t;y,\tau)(p_m(y,\tau)-p(y,\tau))d\mathcal{H}^{d-1}(y)d\tau\right|.
\end{eqnarray*}
Let us denote  $\displaystyle \lVert p_m-p\rVert_{T_s}:=\sup_{\tau\in[0,T_s]}\sup_{y\in\partial\Omega_\tau}|p_m(y,\tau)-p(y,\tau)|$. For the second term of the right hand side, we have
\begin{eqnarray*}
\left|\int_{0}^{t}\int_{\partial\Omega_\tau}\frac{\partial G}{\partial n_{x,t}}(x,t;y,\tau)(p_m(y,\tau)-p(y,\tau))d\mathcal{H}^{d-1}(y)d\tau\right|\leq C_4\int_{0}^{t}\frac{\lVert p_m-p\rVert_{T_s}}{(t-\tau)^{\frac{3}{2}-2\gamma}}d\tau\\
=C_5t^{2\gamma-\frac{1}{2}}\lVert p_m-p\rVert_{T_s}\leq C_5T_s^{2\gamma-\frac{1}{2}}\lVert p_m-p\rVert_{T_s}.
\end{eqnarray*}
Let us choose $T_s>0$ such that $C_5T_s^{2\gamma-\frac{1}{2}}<1$. Then
\begin{eqnarray*}
\displaystyle (1-C_5T_s^{2\gamma-1})\lVert p_m-p\rVert_{T_s}&\leq&\sup_{0<t\leq T_s}\sup_{x\in\partial\Omega_t}\left|\int_{\Omega_0}h_m(\xi)\left(\frac{\partial G}{\partial n_{x,t}}(x,t;\xi,0)-\frac{\partial G}{\partial n_{x,t}}(x,t;0,0)\right)d\xi\right|.\\
\displaystyle &\leq&\sup_{0<t\leq T_s}\sup_{x\in\partial\Omega_t}\sup_{|\xi|\leq\frac{1}{m}}\left|\frac{\partial G}{\partial n_{x,t}}(x,t;\xi,0)-\frac{\partial G}{\partial n_{x,t}}(x,t;0,0)\right|.
\end{eqnarray*}
We have
\begin{eqnarray}
\label{463}
&&\displaystyle|\partial_i^{2} G(x,t;\xi,0)|=\left|\left\{\left(\frac{\xi_i-x_i}{t}\right)^{2}-\frac{1}{t}\right\}  \left(\frac{1}{\sqrt{2\pi t}}\right)^{d}\exp\left\{-\frac{|x-\xi|^{2}}{2t}\right\}\right|\leq\frac{C_6}{|x-\xi|^{d+2}},\nonumber\\
&&\displaystyle|\partial_i^{2} G(x,t;\xi,0)|\leq \frac{C_7}{t^{\frac{d}{2}+1}},\nonumber\\
&&\displaystyle|\partial_j\partial_i G(x,t;\xi,0)|=\left|\frac{(\xi_i-x_i)(\xi_j-x_j)}{t^2} \left(\frac{1}{\sqrt{2\pi t}}\right)^{d}\exp\left\{-\frac{|x-\xi|^{2}}{2t}\right\}\right|\leq\frac{C_8}{|x-\xi|^{d+2}},\nonumber\\
&&\displaystyle|\partial_j\partial_i G(x,t;\xi,0)|\leq \frac{C_9}{t^{\frac{d}{2}+1}}.
\end{eqnarray}

For all sufficiently small $T_s$ and all sufficiently large $m$, by the mean value theorem and \eqref{463}, there exists $C_{10}>0$ such that for all $|\xi|\leq\frac{1}{m}$,
\begin{eqnarray*}
\left|\frac{\partial G}{\partial n_{x,t}}(x,t;\xi,0)-\frac{\partial G}{\partial n_{x,t}}(x,t;0,0)\right|\leq\frac{C_{10}|\xi|}{(|x|-\frac{1}{m})^{d+2}}\leq \frac{C_{10}}{m(|x|-\frac{1}{m})^{d+2}}
\end{eqnarray*}

Therefore, we conclude that $p_m$ converges to $p$ in sup norm for all sufficiently small $T_s>0$.\\
To extend from $T_s$ to $T^{\star}$, assuming that  $p_m$ converges to $p$ in $\displaystyle C_{S_{T_s}}$ for some $T_s>0$ and writing $\displaystyle \lVert p_m-p\rVert_{[T_s,T^{\star}]}=\sup_{\tau\in[T_s,T^{\star}]}\sup_{y\in\partial\Omega_s}|p_m(y,\tau)-p(y,\tau)|$, for $\displaystyle T_s\leq t\leq T^{\star}$, we deduce that by the mean value theorem and \eqref{463},
\begin{eqnarray*}
|p_m(x,t)-p(x,t)|&\leq& \left|\int_{\Omega_0}h_m(\xi)\left(\frac{\partial G}{\partial n_{x,t}}(x,t;\xi,0)-\frac{\partial G}{\partial n_{x,t}}(x,t;0,0)\right)d\xi\right|\\
&+&\left|\int_{0}^{T_s}\int_{\partial\Omega_\tau}\frac{\partial G}{\partial n_{x,t}}(x,t;y,\tau)(p_m(y,\tau)-p(y,\tau))d\mathcal{H}^{d-1}(y)d\tau\right|\\
&+&\left|\int_{T_s}^{t}\int_{\partial\Omega_\tau}\frac{\partial G}{\partial n_{x,t}}(x,t;y,\tau)(p_m(y,\tau)-p(y,\tau))d\mathcal{H}^{d-1}(y)d\tau\right|\\
&\leq& \frac{C_{11}}{mT_s^{\frac{d}{2}+1}}+C_1\int_{0}^{T_s}\frac{\lVert p_m-p\rVert_{T_s}}{(t-\tau)^{\frac{3}{2}-2\gamma}}d\tau+C_1\int_{T_s}^{t}\frac{\lVert p_m-p\rVert_{[T_s,T^{\star}]}}{(t-\tau)^{\frac{3}{2}-2\gamma}}d\tau\\
&\leq& \frac{C_{11}}{mT_s^{\frac{d}{2}+1}}+C_1\int_{0}^{T_s}\frac{\lVert p_m-p\rVert_{T_s}}{(T_s-\tau)^{\frac{3}{2}-2\gamma}}d\tau+C_{12}\lVert p_m-p\rVert_{[T_s,T^{\star}]}(t-T_s)^{2\gamma-\frac{1}{2}}\\
&\leq&\frac{C_{11}}{mT_s^{\frac{d}{2}+1}}+C_{13}T_s^{2\gamma-\frac{1}{2}}\lVert p_m-p\rVert_{T_s}+C_{12}\lVert p_m-p\rVert_{[T_s,T^{\star}]}(T^{\star}-T_s)^{2\gamma-\frac{1}{2}}.
\end{eqnarray*}
Let us choose $T^{\star}>T_s$ such that $C_{12}(T^{\star}-T_s)^{2\gamma-\frac{1}{2}}<1$, then we have
\begin{eqnarray*}
(1-C_{12}(T^{\star}-T_s)^{\gamma-\frac{1}{2}})\lVert p_m-p\rVert_{[T_s,T^{\star}]}\leq\frac{C_{11}}{mT_s^{\frac{d}{2}+1}}+C_{13}T_s^{2\gamma-\frac{1}{2}}\lVert p_m-p\rVert_{T_s}.
\end{eqnarray*}
The right term vanishes when $m$ goes to $\infty$ so that $p_m$ converges to $p$ in $\displaystyle C(S_{T_s+C_{14}})$ for some constant $C_{14}>0$. By repeating this argument inductively, it follows that $p_m$ converges to $p$ in sup norm.
\end{proof}

\vskip 0.5cm
Now we can show that $p$ is the continuous density function of $\displaystyle dF_{r_0}$.\\
By Proposition \ref{prop.5}, we have
\begin{eqnarray*}
\displaystyle \lim_{m\rightarrow \infty}\int_{\Omega_0}h_{m}(\xi)P_{\xi}(\tau_\xi^{\Omega,v}\in I)d\xi=\lim_{m\rightarrow \infty}\int_{I}\int_{\partial\Omega_t}p_m(x,t)dt=\int_{I}\int_{\partial\Omega_t}p(x,t)dt.
\end{eqnarray*}
For $I=[0,t]\subset[0,T]$, therefore, we obtain that
\begin{eqnarray}
\displaystyle \int_{[0,t]}\int_{\partial\Omega_s}dF_{r_0}(y,s)=P_{r_0}(\tau_{r_0}^{\Omega,v}\leq t)=\int_{0}^{t}\int_{\partial\Omega_s}p(y,s)ds,
\end{eqnarray}
which implies that $p$ is the continuous density function of $\displaystyle dF_{r_0}(y,s)$, thus item $\ref{1.2}$ is proved.\\ 
By Theorem \ref{thm1} and the properties of the Gaussian kernel, $\displaystyle G_{0,t}^{\Omega,v}(r_0,x)$ solves $(\ref{1})$, $(\ref{2})$ and $(\ref{3})$. Hence $G^{\Omega,v}$ is the Green function of the heat equation with Dirichlet boundary condition which implies item $\ref{1.4}$. Furthermore, $\displaystyle G_{0,t}^{\Omega,v}(r_0,x)$ can be written as
\begin{eqnarray}
\displaystyle G_{0,t}^{\Omega,v}(r_0,x)=G_{0,t}(r_0,x)-\int_{0}^{t}\int_{\partial\Omega_\tau}G_{\tau,t}(y,x)p(y,\tau)d\mathcal{H}^{d-1}(y)d\tau.
\end{eqnarray}
Taking a normal derivative with respect to x, applying the jump relation and \eqref{111}, we have
\begin{eqnarray}
\displaystyle \frac{\partial}{\partial n}G_{0,t}^{\Omega,v}(r_0,x)&=&\frac{\partial}{\partial n}G_{0,t}(r_0,x)-p(x,t)-\int_{0}^{t}\int_{\partial\Omega_\tau} \frac{\partial}{\partial n}G_{\tau,t}(y,x)p(y,\tau)d\mathcal{H}^{d-1}(y)d\tau\nonumber\\
&=&-2p(x,t).
\end{eqnarray}
Thus
$\displaystyle p(x,t)=-\frac{1}{2}\frac{\partial}{\partial n}G_{0,t}^{\Omega,v}(r_0,x)$, so it proves item $\ref{1.3}$ of Theorem \ref{mainthm}.

\section*{Acknowledgement}

\vskip 1cm

\end{document}